\patchcmd{\@algocf@start}{\addtolength{\hsize}{-1.5em}}{}{}{}
\pgfplotsset{compat=1.16}
\setlist[enumerate,1]{label=\textup{(\roman*)}}
		\colorlet{notationframecolor}{assumptionframecolor}
		\colorlet{notationshadecolor}{assumptionshadecolor}
\newcommand\TV{\operatorname{TV}}
\newcommand\TVp{\operatorname{TV}_p}
\newcommand\pVp{\operatorname{pV}_p}
\newcommand\eVp{\operatorname{eV}_p}
\newcommand\BV{\operatorname{BV}}
\newcommand\F{\mathbb{F}}
\newcommand\ared{\operatorname{ared}}
\newcommand\pred{\operatorname{pred}}
\newcommand\STfull{\hyperref[eq:ST]{\textup{(ST($\hat n, \hat a$))}}}
\newcommand\STbar{\hyperref[eq:ST]{\textup{(ST($\bar n, \bar a$))}}}
\definecolor{BTUorange}{RGB}{255,128,0}
\definecolor{BTUdarkblue}{RGB}{0,0,250}
\definecolor{BTUred}{RGB}{255,40,20}
\definecolor{BTUlightblue}{RGB}{51,153,255}
\definecolor{BTUgreen}{RGB}{0,153,0}
\pgfplotsset{
	extra x tick style={color = cyan}
}
\title[Vector-Valued Integer Optimal Control with TV Regularization]%
{Vector-Valued Integer Optimal Control with TV Regularization: Optimality Conditions and Algorithmic Treatment}
\author[Marko, Wachsmuth]{%
	Jonas Marko%
	\footnote{%
		Brandenburgische Technische Universität Cottbus--Senftenberg,
		Institute of Mathematics,
		03046 Cott\-bus,
		Germany,
		\email{markojo1@b-tu.de},
		\url{https://www.b-tu.de/fg-optimale-steuerung},
		ORCID: \href{https://orcid.org/0000-0001-7745-1254}{0000-0001-7745-1254}%
	},
	Gerd Wachsmuth%
	\footnote{%
		Brandenburgische Technische Universität Cottbus--Senftenberg,
		Institute of Mathematics,
		03046 Cott\-bus,
		Germany,
		\email{gerd.wachsmuth@b-tu.de},
		\url{https://www.b-tu.de/fg-optimale-steuerung},
		ORCID: \href{https://orcid.org/0000-0002-3098-1503}{0000-0002-3098-1503}%
	}
}
\begin{document}
	
	\maketitle
	
	\begin{abstract}
		We investigate a broad class of integer optimal control problems with vector-valued controls and switching regularization using a total variation functional involving the $p$-norm, which influences the structure of a solution. 
		We derive optimality conditions of first and second order for the integer optimal control problem via a switching-point reformulation. 
		
		For the numerical solution, we use a trust region method utilizing Bellman's optimality principle for the subproblems. We will show convergence properties of the method and highlight the algorithms efficacy on some benchmark examples. 
	\end{abstract}
	
	\begin{keywords}
		integer optimal control problem,
		total variation regularization,
		trust-region method
	\end{keywords}
	
	\begin{msc}
		\mscLink{49K30},
		\mscLink{49L20}
		\mscLink{49M37},
		\mscLink{90C10}
	\end{msc}
	
	\section{Introduction}
	\label{sec:intro}
	Integer optimal control problems (IOCPs) offer a framework to address decision-making scenarios where discrete choices govern continuous processes, thus, leading to applications in various domains, such as in automotive control (\cite{Gerdts2005,Kirches2010}), energy transmission (\cite{Goettlich2018}), robotics (\cite{ZhangLiGuo2014}), or in controlling gas networks (\cite{RuefflerHante2016,HabeckPfetschUlbrich2019}), traffic (\cite{Goettlich2014, Goettlich2017}) and the heating of buildings (\cite{Burger2018,Zavala2010}). 
	
	In tackling IOCPs, the combinatorial integral approximation has emerged as a popular methodological approach, see e.g. \cite{SagerJungKirches2011,ZeileWeberSager2022}. However, the computation of the binary controls in the rounding step is decoupled from the dynamics and objective of the IOCP, which may lead to poor approximation performance, as highlighted in \cite[p.~139]{Buerger2021}. Furthermore, it produces control functions with chattering behaviour, which is often undesired.
	Addressing this concern, total variation regularization has gained prominence as an effective mechanism resulting in optimal solutions without chattering. While recent research has explored IOCPs with multidimensional domains (\cite{MannsSchiemann2023,SeverittManns2024}), our focus is on the time-dependent control setting.

	Particularly, we are interested in IOCPs with vector-valued controls.
	For the case of IOCPs governed by ordinary differential equations (ODEs), \cite{BestehornHansknechtKirchesManns2020} suggests a switching cost aware rounding method for the solution of a relaxed problem.
	Here, the relaxed problem ignores the switching costs
	and, consequently,
	the switching costs of the rounded solution
	converge towards $\infty$ as the time discretization goes to $0$.
	In the recent contributions \cite{SagerZeile2021,SagerTetschkeZeile2024},
	the switching cost is included in the relaxed problem,
	however, this is not the correct relaxation of the original problem,
	see \cite[Counterexample~3.1]{BuchheimGrueteringMeyer2024:1}.

	The case of IOCPs governed by a partial differential equation (PDE) was investigated in \cite{HahnKirchesMannsSagerZeile2022}, where the approach of the combinatorial integral approximation was generalized. In \cite{BuchheimGrueteringMeyer2024:1,BuchheimGrueteringMeyer2024:2}, the convex hull of the set of feasible controls is constructed as intersection of convex sets derived from finite-dimensional projections, and an algorithm to solve this relaxed problem is presented.

	\subsection*{Contribution}
	We present an analysis of a broad class of IOCPs with time-dependent, vector-valued controls. We use a generalized total variation functional to stimulate the appearance of simultaneous switches between the control components. The properties of this functional will be investigated in \cref{sec:TVp}. We generalize the approach of \cite{LeyfferManns2021,MarkoWachsmuth2022:1} to get optimality conditions of first and second order via a switching-point optimization problem, see \cref{sec:conditions}. The main ingredient is \cref{thm:equivalence}, which shows equivalence of local optimality (or a local quadratic growth condition) w.r.t.\ the original problem and the switching-point reformulation. For this, admissible controls need to be encoded following a special construction, see \cref{def:full_representation}. Furthermore, we transfer the obtained conditions regarding the switching-point problem into a more accessible form in \cref{thm:opt_conditions}.
	
	In the second part of the paper, the focus lies in the analysis of a trust-region algorithm in \cref{sec:convergence} and a solution method for the arising subproblems involving Bellman's optimality principle in \cref{sec:bellman}. Finally, we present numerical experiments for two IOCPs, one governed by an ODE, the other by a PDE in \cref{sec:examples}.
	
	\subsection*{Notation}
	We write $\norm{v}_p$ for the usual $p$-norm of a vector $v\in\R^M$ for $p\in[1,\infty]$. 
	If $u\in L^p(0,T)^M$, we set $\norm{u}_{L^p}:=\left(\int_0^T \norm{u(t)}_p^p\,\d t\right)^{\frac 1p}$ if $p\in[1,\infty)$ 
	and $\norm{u}_{L^\infty}:= \esssup_{t\in(0,T)} \norm{u(t)}_{\infty}$. We define $[d] := \{1,\dots,d\}$ and $[d]_0:=\{0,\dots,d\}$ for any $d\in\N_0:=\N\cup\{0\}$. Note that $[d] = \emptyset$ if $d=0$.
	
	\section{Problem Formulation and Properties}
	\label{sec:problem}
	We investigate problems of the form
	\begin{equation}
		\label{eq:prob}
		\tag{P}
		\begin{aligned}
			\text{Minimize} \quad & F(u) + \beta \TVp(u) \\
			\text{such that} \quad & u(t) \in 
			\VV\subset \Z^M\quad\text{ for a.a.\ } t \in (0,T).
		\end{aligned}
	\end{equation}
	Here, the admissible control values are integers in every component and 
	the set $\VV$ is assumed to be finite. Thus, we will denote it by
	$\VV:=\set{ \nu_1, \ldots, \nu_d}$,
	where $d\in\N$, $d\geq 2$ and $\nu_i \in\Z^M$ for all $i\in[d]$. 
	
	In the objective, the functional $F \colon L^1(0,T)^M \to \R$ is assumed to be bounded from below, lower semicontinuous and Gâteaux differentiable. It might contain the solution operator of a differential equation, i.e. $F(u) = \hat F(S(u),u)$, where $S$ is a control-to-state operator, e.g., mapping $u$ to a state satisfying some ODE or PDE, see also \cref{sec:examples}.
	
	In order to show the existence of a solution and to impose a nice structure on an optimal control, 
	the regularization term $\TVp$ weighted by $\beta >0$ is used. 
	The functional $\TVp$ works, in essence, in the same way as the standard total variation by penalizing jumps of the control. 
	In order to gain more influence on the switching structure of the (componentwise) piecewise constant function $u$, we will utilize the $p$-norm for $p\in[1,\infty]$ and define $\TVp$ by 

	\begin{equation*}
		\TVp(u,J) := \sup\set*{\int_J u^\top \varphi' \,\d t\given \varphi\in C_c^1(J)^M,\,\norm{\varphi(t)}_{p'}\leq 1 \,\forall t\in J},
	\end{equation*}
	where $p'$ is the Hölder conjugate of $p$ and $J\subset(0,T)$ is open. Also, we write $\TVp(u):=\TVp(u,(0,T))$. If $u$ is differentiable, it can be shown that 
	$\TVp(u) = \int_0^T\norm{u'(t)}_p\,\d t$. 
	 
	In \cref{ex:p-vals}, we motivate how different values of $p$ may influence the behaviour of an optimal control.
	\begin{example}
	\label{ex:p-vals}
	For $M=2$, consider $u=(u_1,u_2)^\top$ with $u_1, u_2$ as depicted in \cref{fig:ex_p-vals},
	i.e.,
		\[u_1(t) = \begin{cases}
				0 & \text{if } t\leq 2\\
				2 & \text{if } t > 2
		\end{cases}
		\quad\text{and}\quad
		u_2(t) = \begin{cases}
			2 & \text{if } t\leq 2+\varepsilon\\
			0 & \text{if } t > 2+\varepsilon
	\end{cases}\]
	with $\varepsilon \in [0,2)$.
	We have
	\[\begin{aligned}
	\TV_1(u)
	= 4,\quad 
	\TV_2(u) 
	&= \begin{cases}
		2\sqrt 2 & \text{if } \varepsilon = 0\\
		4 & \text{if } \varepsilon>0,
	\end{cases}\quad
	\TV_4(u)
	= \begin{cases}
		2 \sqrt[4]{2} & \text{if } \varepsilon = 0\\
		4 & \text{if }\varepsilon>0,
	\end{cases}\\
	\TV_\infty(u)
	&= \begin{cases}
		2 & \text{if } \varepsilon = 0\\
		4 & \text{if } \varepsilon>0.
	\end{cases}
\end{aligned}\]
	For increasing $p$, any shared discontinuities of $u_1$ and $u_2$ will incur a smaller penalization by $\TVp$, thus encouraging shared switches in the components of $u$. 
	
	For admissible $u$, the following is true:
	\begin{itemize}
		\item $\TV_1(u) = \sum_{i=1}^{M}\TV(u_i)$
		\item $\TV_2(u) = \TV(u)$, where $\TV$ is the classic total variation functional, see \eqref{eq:TV} below.
		\item $\TV_\infty$ penalizes shared discontinuities only once by the value of the biggest jump
	\end{itemize} 
	\end{example}
	
	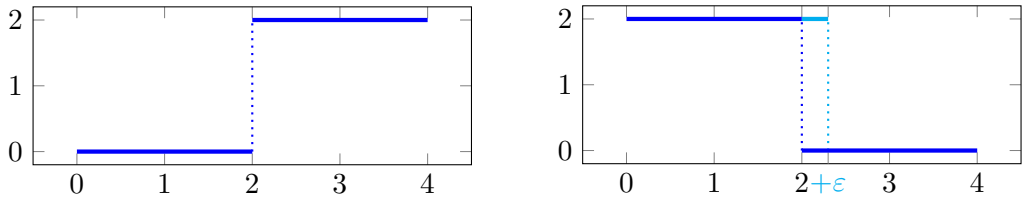
\begin{figure}[htp]
		\centering
		\begin{tikzpicture}
			\begin{axis}[width=.5\textwidth,height = .25\textwidth,xmin=-0.5,xmax=4.5, xtick ={0,1,2,3,4}]
				\addplot[BTUdarkblue, ultra thick, mark={}] coordinates{
					(0,0) (2,0)
					
					(2,2) (4,2)
				};
				
				\addplot[blue, thick, dotted, mark={}] coordinates{
					(2,0) (2,2)
				};
			\end{axis}
		\end{tikzpicture}%
		\hspace{1cm}%
		\begin{tikzpicture}%
			\begin{axis}[width=.5\textwidth,height = .25\textwidth, xmin=-0.5,xmax=4.5, xtick ={0,1,2,3,4},
				xticklabels = {0,1,2,3,4},
				extra x ticks={2.3},
				extra x tick labels={\color{cyan}$+\varepsilon$}]
				\addplot[cyan, thick, dotted, mark={}] coordinates{
					(2.3,0) (2.3,2)
				};
				\addplot[BTUdarkblue, ultra thick, mark={}] coordinates{
					(0,2) (2,2)
					
					(2,0) (4,0)
				};
			\addplot[cyan, ultra thick, mark={}] coordinates{
				(2,2) (2.3,2)
			};
				
				\addplot[BTUdarkblue, thick, dotted, mark={}] coordinates{
					(2,0) (2,2)
				};
			
			\end{axis}
		\end{tikzpicture}%
		\caption{%
			The components of $u\in L^1(0,T)^2$ with $u(t)\in \{0,1,2\}^2$.
		}
		\label{fig:ex_p-vals}
	\end{figure}
	Finally, the set of admissible controls is denoted by
	\[\Uad := \set*{u\in\BV(0,T)^M\given u(t)\in\VV\,\text{ for a.a.\ } t \in (0,T)},\]
	where $\BV(0,T)^M$ is the space of $L^1(0,T)^M$-functions with bounded variation, equipped with the norm 
	\[\norm u_{\BV(0,T)^M} = \norm u_{L^1} + \TV_2(u),\]
	see also \cite[Section 3.1]{Ambrosio2000}.
	
	\section{The $\TVp$ Functional}
	\label{sec:TVp}
	In this section, we want to show that the $\TVp$ functional works, in essence, the same way as the well-known $\TV$ functional defined by
	
	\begin{equation}
		\label{eq:TV}
		\TV(u):=\sup\set*{\int_0^T u^\top \varphi' \, \d t\given \varphi\in C^1_c(0,T),~\norm{\varphi(t)}_{2} \le 1\ \forall t\in(0,T)},
	\end{equation}
	c.f.\ \cite[Definition 3.4]{Ambrosio2000}.
	First, we need some general results on $\BV(0,T)^M$ and $\TVp$.

	\begin{theorem} 
		\label{thm:props}
		The space $\BV(0,T)^M$ and the functional $\TVp$ have the following properties.
		\begin{enumerate}
			\item\label{thm:props:0}
			The space $\BV(0,T)^M$ is (isometric isomorphic to) the dual space of a separable Banach space.
			\item\label{thm:props:1} For a sequence $\seq{u_k}_{k\in\N}\subset \BV(0,T)^M$, we have $u_k\weaklystar 
			u$ in $\BV(0,T)^M$ if and only if $u_k\to u$ in $L^1(0,T)^M$ and 
			$\seq{u_k}_{k\in\N}$ is bounded in $\BV(0,T)^M$.
			\item\label{thm:props:2} $\BV(0,T)^M$ is continuously embedded in $L^\infty(0,T)^M$ and compactly embedded in $L^q(0,T)^M$ for all $q\in[1,\infty)$.
			\item\label{thm:props:3} When $u_k \weaklystar u$ in $\BV(0,T)^M$, we have $u_k\to u$ in $L^q(0,T)^M$ for all $q\in[1,\infty)$.
			\item\label{thm:props:4} If $\seq{u_k}_{k\in\N}$ is bounded in $\BV(0,T)^M$, there exists a weak-$\star$ accumulation point of $\seq{u_k}$.
			\item\label{thm:props:5}
			The functional $\TVp$ is lower semicontinuous on $L^1(0,T)^M$,
			i.e., $u_k \to u$ in $L^1(0,T)^M$ implies $\TVp(u) \le \liminf_{k \to \infty} \TVp(u_k)$.
		\end{enumerate}
	\end{theorem}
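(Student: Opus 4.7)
The plan is to reduce items (i)–(iv) to their well-known scalar analogues in $\BV(0,T)$ as found in \cite[Chapter 3]{Ambrosio2000}, and to establish item (v) directly from the supremum definition of $\TVp$. The key observation making the reduction work is that, since all $p$-norms on $\R^M$ are equivalent, the norm $\norm{u}_{\BV} = \norm{u}_{L^1} + \TV_2(u)$ is equivalent to $\sum_{i=1}^M(\norm{u_i}_{L^1(0,T)} + \TV(u_i))$, so that the Banach space $\BV(0,T)^M$ is topologically (and after a trivial renorming, isometrically) isomorphic to the $M$-fold Cartesian product of the scalar space $\BV(0,T)$.

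For item (i), I would use the scalar statement that $\BV(0,T)$ is the dual of a separable Banach space (for a bounded interval, a predual is essentially $C_0 \oplus L^1$, see the remarks following the definition of $\BV$ in \cite{Ambrosio2000}); the $M$-fold product of such duals is the dual of the $M$-fold product of preduals, which remains separable. Items (ii)–(iv) follow componentwise: (ii) is the scalar Sobolev-type embedding applied to each $u_i$; (iii) combines (ii) with the $L^1$-convergence part of (ii); and (iv) is Banach–Alaoglu combined with (i), using that the predual is separable to extract a weak-$\star$ convergent subsequence. Each of these is stated in the scalar case in \cite[Sections~3.1--3.2]{Ambrosio2000} and the characterization in (ii) is the standard sequential description of weak-$\star$ convergence in $\BV$.

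For item (v), I would argue directly from the definition. For each admissible test function $\varphi \in C_c^1(0,T)^M$ with $\norm{\varphi(t)}_{p'}\leq 1$ for all $t$, the map
\[
L^1(0,T)^M \ni u \;\mapsto\; \int_0^T u^\top \varphi'\,\d t
\]
is linear, and bounded with constant $\norm{\varphi'}_{L^\infty}$, hence continuous and in particular lower semicontinuous on $L^1(0,T)^M$. As $\TVp$ is the pointwise supremum of this family of continuous functionals, it is itself lower semicontinuous on $L^1(0,T)^M$, which proves (v).

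I do not foresee any substantial obstacle: the only thing requiring some care is keeping track of the norm equivalence in the product construction for (i) and (iv), and verifying that the predual indeed remains separable under finite products, but both are routine. The rest is an appeal to the scalar theory and a standard convex-analysis argument for (v).
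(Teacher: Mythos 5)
Your proposal is correct in substance and, at its core, follows the same strategy as the paper: items (i)--(iii) are obtained by appealing to the standard BV theory in Ambrosio--Fusco--Pallara, item (v) is sequential weak-$\star$ compactness coming from (i) via the separable predual, and item (vi) is proved exactly as in the paper, namely $\TVp$ is a pointwise supremum of the functionals $u \mapsto \int_0^T u^\top \varphi'\,\d t$, which are continuous (indeed bounded linear) on $L^1(0,T)^M$, hence lower semicontinuous. Two points where you deviate are worth flagging. First, your reduction to the scalar case via norm equivalence only gives that $\BV(0,T)^M$ is \emph{isomorphic} to a dual space; the stated parenthetical ``isometric'' refers to the actual norm $\norm{u}_{L^1} + \TV_2(u)$, and your product-of-scalar-preduals construction does not produce an isometry for this norm (you concede a ``renorming'', but that changes the space). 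The paper avoids this by citing the vector-valued results of Ambrosio et al.\ directly, which are formulated for $\BV(\Omega;\R^M)$ with precisely the Euclidean total variation $\TV_2$; also, your candidate predual ``$C_0 \oplus L^1$'' is not correct as literally stated (its dual is $M \oplus L^\infty$, strictly larger than $\BV$) --- the correct construction is the one indicated in Ambrosio's Remark 3.12, so you should simply defer to it rather than sketch a predual. Since the only downstream use of (i) is the sequential weak-$\star$ compactness in (v), this imprecision is harmless for the rest of the paper, but it does leave the isometric claim unproved. Second, for item (iv) the paper argues via the interpolation inequality $\norm{u_k-u}_{L^q} \le \norm{u_k-u}_{L^1}^{1/q}\norm{u_k-u}_{L^\infty}^{1-1/q}$ combined with (ii) and the $L^\infty$-embedding from (iii), whereas you (as far as your somewhat garbled item numbering lets one reconstruct) go through the compact embedding and a subsequence--uniqueness argument; both are valid, the interpolation route just avoids extracting subsequences. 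Please also fix the item references in your write-up: as written, ``(iii) combines (ii) with the $L^1$-convergence part of (ii)'' does not parse against the theorem's six-item enumeration.
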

	\begin{proof}
		For \ref{thm:props:0} and \ref{thm:props:1}, \ref{thm:props:2} 
		see \cite[Remark 3.12, Proposition 3.13, Corollary 3.49
		]{Ambrosio2000}.
		Assertion \ref{thm:props:5} can be shown the same way as in \cite[Remark 3.5]{Ambrosio2000} by noting that $u\mapsto \int_0^T u^\top \varphi'\,\d t$ is continuous on $L^1(0,T)^M$. 
		Assertion \ref{thm:props:4} is a direct consequence of \ref{thm:props:0}. 
		
		Finally, we show \ref{thm:props:3} by noting that an interpolation inequality yields that
		\[
			\norm{u_k-u}_{L^q} \leq\norm{u_k-u}_{L^1}^{1/q}\norm{u_k-u}_{L^\infty}^{1-1/q}\to 0,
		\]
		where $\norm{u_k-u}_{L^1}\to 0$ and the boundedness of $\seq{u_k-u}_{k\in\N}$ in $\BV(0,T)^M$ (and thus, in $L^\infty(0,T)^M$ by \ref{thm:props:2}) follow from 
		\ref{thm:props:1}.

	\end{proof}

	With \cref{thm:props}, the existence of a solution of \eqref{eq:prob} can be shown by standard arguments: 
	A minimizing sequence $(u_k)_{k\in\N}\subset \Uad$ is bounded in $L^1(0,T)^M$ by $T\max_{i\in[d]}\set{\norm{\nu_i}_1}$, 
	while the boundedness of $\TVp(u_k)$ follows from the existence of a lower bound of $F$.
	Using \itemref{thm:props:4}, 
	the existence of a weak-$\star$ convergent subsequence $(u_{k_l})_{l\in\N}$
	with $u_{k_l}\weaklystar \bar u\in\BV(0,T)^M$ can be derived.
	Considering \itemref{thm:props:1}, 
	we see that $u_{k_l}\to\bar u$ in $L^1(0,T)^M$. Thus, 
	there is another subsequence converging a.e. pointwise to $\bar u$. 
	It follows that $\bar u(t)\in \VV$,
	hence $\bar u\in\Uad$. 
	Finally, the lower semicontinuity of $F$ and $\TVp$ yield the optimality of $\bar u$.
	
	The $\TV$ functional can be written using the distributional derivative of $u$. To adapt this for the functional $\TVp$,
	we recall the total variation 
	\[\abs{\mu}_p(E):=\sup\set*{\sum_{j=0}^{\infty}\norm{\mu(E_j)}_p\given E_j\in\mathcal{E}\text{ pairwise disjoint}, E=\bigcup_{j=0}^\infty E_j}\quad\forall E\in\EE\]
	of a vector-valued measure $\mu\colon\EE\to\R^M$ on a measurable space $(X,\mathcal{E})$ w.r.t.\ the $p$-norm.
	\begin{theorem}
		\label{thm:Du}
 		Let $Du$ be the finite Radon measure representing the distributional derivative of $u\in\BV(0,T)^M$. Then, $\TVp(u,J) = \abs{Du}_p(J)$ for any open set $J\subset (0,T)$.
	\end{theorem}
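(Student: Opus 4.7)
The plan is to extend the classical argument for the ordinary total variation (cf.\ \cite[Proposition~3.6]{Ambrosio2000}) by using a polar decomposition of $Du$ adapted to the $p$-norm instead of the Euclidean norm. First I would establish the representation $Du = \sigma\,\abs{Du}_p$ with $\norm{\sigma(t)}_p = 1$ for $\abs{Du}_p$-a.e.\ $t \in J$. Starting from the standard Radon--Nikodym decomposition $Du = \nu\,\abs{Du}_2$ with $\norm{\nu(t)}_2 = 1$ $\abs{Du}_2$-a.e., a partition-refinement argument yields $\abs{Du}_p = \norm{\nu}_p\,\abs{Du}_2$: the upper bound is the triangle inequality $\norm{\int_{E_i}\nu\,\d\abs{Du}_2}_p \le \int_{E_i}\norm{\nu}_p\,\d\abs{Du}_2$, while the matching lower bound follows by choosing partitions on which $\nu$ is nearly constant (i.e.\ approximating $\nu$ by simple functions in $L^1(\abs{Du}_2)$). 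Setting $\sigma := \nu/\norm{\nu}_p$ on $\{\nu\neq 0\}$ and $\sigma := 0$ otherwise then gives the decomposition.

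For the upper bound $\TVp(u,J) \le \abs{Du}_p(J)$, I would insert this decomposition into the integration-by-parts formula
\[
	\int_J u^\top \varphi'\,\d t
	= -\int_J \varphi^\top \,\d(Du)
	= -\int_J \varphi(t)^\top \sigma(t)\,\d\abs{Du}_p(t),
\]
valid for $\varphi \in C_c^1(J)^M$. Pointwise Hölder gives $\abs{\varphi^\top\sigma} \le \norm{\varphi}_{p'}\norm{\sigma}_p \le 1$ for $\abs{Du}_p$-a.e.\ $t$, and taking the supremum over admissible $\varphi$ proves the estimate.

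For the lower bound, fix for every $x \in \R^M$ a dual unit vector $g(x) \in \R^M$ with $\norm{g(x)}_{p'} \le 1$ and $g(x)^\top x = \norm{x}_p$ (existence by Hahn--Banach). The pointwise choice $\psi(t) := -g(\sigma(t))$ formally attains equality in the preceding display but is only Borel measurable, so a regularization step is required. I would approximate $\psi$ in $L^1(J,\abs{Du}_p)^M$ by continuous compactly supported functions (using density of $C_c(J)^M$ in $L^1$ with respect to a finite Radon measure), compose with the radial retraction onto the closed unit ball of $(\R^M,\norm{\cdot}_{p'})$ to enforce the pointwise constraint, and mollify to obtain $\varphi_k \in C_c^1(J)^M$; mollification preserves the bound $\norm{\varphi_k(t)}_{p'} \le 1$ by Jensen's inequality applied to the convex function $\norm{\cdot}_{p'}$. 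Dominated convergence (the integrands are bounded by $\norm{\sigma}_p \in L^1(\abs{Du}_p)$) then yields
\[
	-\int_J \varphi_k^\top \sigma\,\d\abs{Du}_p \;\longrightarrow\; \int_J \norm{\sigma}_p\,\d\abs{Du}_p = \abs{Du}_p(J).
\]

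The main obstacle is this approximation step: producing smooth compactly supported approximants that satisfy $\norm{\varphi_k(t)}_{p'} \le 1$ for every $t \in J$ rather than only $\abs{Du}_p$-a.e. The combination of radial retraction (which enforces the bound without destroying $L^1$-convergence) and mollification (which preserves it by convexity of the norm) circumvents this without requiring any additional regularity of the density $\sigma$.
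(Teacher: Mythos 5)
Your plan is correct, but it follows a different route than the paper. The paper disposes of the statement by citation: it invokes \cite[Proposition~3.6]{Ambrosio2000} for $p=2$ and asserts that the definitions and proofs of \cite[Chapter~1]{Ambrosio2000} (Riesz representation, polar decomposition, etc.) only need minor modifications to accommodate a general $p$-norm, i.e.\ the general case is obtained by redoing the measure-theoretic machinery with $\norm{\cdot}_p$ in place of $\norm{\cdot}_2$. You instead take the Euclidean case as a black box and \emph{reduce} to it: from $Du=\nu\abs{Du}_2$ with $\norm{\nu}_2=1$ a.e.\ you derive $\abs{Du}_p=\norm{\nu}_p\,\abs{Du}_2$ by the partition argument, obtain the $p$-adapted polar decomposition $Du=\sigma\abs{Du}_p$ with $\norm{\sigma}_p=1$ a.e., and then prove the two inequalities directly — Hölder for the upper bound, and an explicit dual test function built from a duality selection $g$, truncated by the radial retraction and mollified, for the lower bound. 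What your approach buys is self-containedness: no re-examination of \cite[Chapter~1]{Ambrosio2000} is needed, and the argument makes the role of the $p$-$p'$ duality completely explicit (which also matches the explicit dual test function already used in the proof of \cref{lem:similar_local_points}); what the paper's approach buys is brevity and the fact that all statements of Chapter~1 become available verbatim for $\abs{\cdot}_p$. One small point you should make explicit: Hahn--Banach alone does not give Borel measurability of $x\mapsto g(x)$; for $p\in(1,\infty)$ use the explicit duality map $g_i(x)=x_i\abs{x_i}^{p-2}/\norm{x}_p^{p-1}$, and for $p\in\{1,\infty\}$ a measurable selection is easily written down by hand (sign vectors, respectively a measurable tie-breaking rule for the maximal coordinate), after which the rest of your regularization step (Lipschitz radial retraction, Jensen for the mollification, compact supports inside $J$) goes through as described.
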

	\begin{proof}
		The result was shown for $p=2$ in \cite[Proposition 3.6]{Ambrosio2000}. It can be checked that only small changes in the proofs and definitions of \cite[Chapter 1]{Ambrosio2000} are necessary in order to get the result for $p\in[1,\infty]$.
	\end{proof}
	
	In \cite[Chapter 3.2]{Ambrosio2000}, a pointwise variation for some function $u\colon(0,T)\to\R^M$ was introduced, such that its value coincides with $\TV(v)$, where $v=u$ a.e. in $(0,T)$. We want to show a similar result. Thus, we first define a pointwise variation utilizing the $p$-norm.
	
	\begin{definition}
		\label{def:pVp_eVp}
		For $u\colon(0,T)\to\R^M$, the pointwise $p$-variation $\pVp(u)$ of $u$ is defined by
		\begin{equation*}
			\pVp(u):= \sup\set*{\sum_{i=1}^{n-1}\norm{u(t_{i+1})-u(t_i)}_p\given n\geq 2,\, 0<t_1<\ldots<t_n < T}.
		\end{equation*}	
		
		Furthermore, the essential $p$-Variation $\eVp(u)$ is defined by
		\begin{equation}
			\label{eq:eVp}
			\eVp(u) := \inf\set*{\pVp(v)\given v=u\text{ a.e. in }(0,T)}.
		\end{equation}
	
	\end{definition}
	
	\begin{theorem}
		\label{thm:eVp_TVp}
		For $u\in L^1(0,T)$, the infimum in \eqref{eq:eVp} is achieved and $\TVp(u)$ coincides with $\eVp(u)$.
	\end{theorem}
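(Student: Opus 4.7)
The plan is to prove both inequalities $\TVp(u) \le \eVp(u)$ and $\eVp(u) \le \TVp(u)$ separately, where the second one will be established by constructing an explicit representative $\bar u$ of $u$ with $\pVp(\bar u) \le \TVp(u)$, which simultaneously shows that the infimum in the definition of $\eVp$ is achieved. This mirrors the corresponding argument for the classical scalar $\TV$ in \cite[Theorem 3.28]{Ambrosio2000}, and the key task is to verify that everything remains intact when the $2$-norm is replaced by an arbitrary $p$-norm.

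For the inequality $\TVp(u) \le \eVp(u)$, I would fix any $v\colon(0,T)\to\R^M$ with $v=u$ a.e.\ and any admissible test function $\varphi\in C_c^1(0,T)^M$ with $\norm{\varphi(t)}_{p'}\le 1$. Choose $0<a<b<T$ so that $\operatorname{supp}\varphi\subset[a,b]$ and, for a partition $a=t_0<t_1<\dots<t_n=b$ of mesh size tending to zero, approximate
\begin{equation*}
\int_0^T u^\top\varphi'\,\d t = \int_0^T v^\top\varphi'\,\d t \approx \sum_{i=0}^{n-1} v(t_i)^\top\bigl(\varphi(t_{i+1})-\varphi(t_i)\bigr).
\end{equation*}
Applying Abel's summation and using that $\varphi(t_0)=\varphi(t_n)=0$ rewrites the right-hand side as $-\sum_{i=1}^{n-1}(v(t_i)-v(t_{i-1}))^\top\varphi(t_i)$, which by Hölder is bounded by $\sum_{i=1}^{n-1}\norm{v(t_i)-v(t_{i-1})}_p \le \pVp(v)$. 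Choosing the partition points among the Lebesgue points of $v$ (outside a null set) and passing to the limit via dominated convergence yields $\int_0^T u^\top\varphi'\,\d t\le\pVp(v)$, and taking the supremum over $\varphi$ gives $\TVp(u)\le\pVp(v)$, hence $\TVp(u)\le\eVp(u)$.

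For the reverse inequality and the attainment, I may assume $\TVp(u)<\infty$, so $u\in\BV(0,T)^M$ and, by \cref{thm:Du}, $\TVp(u)=\abs{Du}_p((0,T))$. Following the scalar construction, I would define the right-continuous representative
\begin{equation*}
\bar u(t):= u_0 + Du\bigl((0,t]\bigr)\quad\text{for }t\in(0,T),
\end{equation*}
where $u_0\in\R^M$ is chosen so that $\bar u=u$ a.e.; the existence of such a constant follows from the fact that any two $L^1$-representatives of a $\BV$-function differing only by a constant is forced by $D(\bar u-u)=0$ in the distributional sense (applied componentwise). Then for any points $0<t_1<\dots<t_n<T$,
\begin{equation*}
\sum_{i=1}^{n-1}\norm{\bar u(t_{i+1})-\bar u(t_i)}_p = \sum_{i=1}^{n-1}\norm{Du\bigl((t_i,t_{i+1}]\bigr)}_p \le \abs{Du}_p((0,T))=\TVp(u),
\end{equation*}
using that the sets $(t_i,t_{i+1}]$ are pairwise disjoint subsets of $(0,T)$. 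Taking the supremum over partitions yields $\pVp(\bar u)\le\TVp(u)$, which combined with $\TVp(u)\le\eVp(u)\le\pVp(\bar u)$ gives equality and shows that the infimum is attained by $\bar u$.

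The main obstacle I anticipate is the first paragraph's transition from the Riemann sum back to the integral, which requires that the chosen partition points lie in the set of Lebesgue points of $v$ and that an adequate dominated convergence argument applies uniformly in the supremum over test functions $\varphi$; care must be taken because $v$ need not be bounded a priori, so one instead first bounds the sums uniformly by $\pVp(v)$ and only then passes to the limit in the integral. A secondary subtlety is justifying that $\bar u$ defined via $Du((0,t])$ agrees a.e.\ with $u$; one can invoke that $D\bar u=Du$ as distributions on $(0,T)$ and that $\bar u-u$ has vanishing distributional derivative, forcing it to be constant a.e., and absorb this constant into the choice of $u_0$.
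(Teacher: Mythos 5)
Your proposal is correct and follows essentially the same route as the paper: both adapt \cite[Theorem 3.27]{Ambrosio2000} to the $p$-norm, proving $\TVp(u)\le\pVp(v)$ for every representative $v$ by a summation-by-parts/Hölder estimate with the $p$--$p'$ pairing, and obtaining the reverse inequality together with attainment of the infimum from \cref{thm:Du} via the good representative $\bar u(t)=u_0+Du((0,t])$ — a step the paper delegates to the cited reference but which you write out correctly. The only organizational difference lies in the first inequality, where the paper evaluates $\TVp$ of step-function approximants and invokes the lower semicontinuity of $\TVp$, whereas you bound the Riemann sums directly; this is fine once one assumes, as one may, that $\pVp(v)<\infty$, so that $v$ is bounded and regulated and the sums converge to $\int_0^T v^\top\varphi'\,\d t$ without any need for Lebesgue-point selection.
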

	\begin{proof}
		Similarly to \cite[Theorem 3.27]{Ambrosio2000}, we first prove $\TVp(u)\leq \eVp(u)$, i.e. that $\TVp(u)\leq \pVp(v)$ for any $v$ in the equivalence class of $u$ and assume w.l.o.g. that $\pVp(v)<\infty$. 
		For any integer $j\geq 1$ and the collection of points $\{t_i^j\}_{i\in [n_j]_0}$ with 
		\[\begin{aligned}
			&t_{0}^j = 0,\quad t_{n_j}^j = T,\quad 0<t_{i+1}^j-t_i^j\leq \frac 1j\quad\forall i\in[n_j-1],
		\end{aligned}\]
		we define the step functions 
		\[v_j(t):= \sum_{i=0}^{n_j-1}v(y_i^j)\chi_{(t_i^j,t_{i+1}^j]}(t)\]
		for some fixed $y_i^j\in (t_i^j,t_{i+1}^j]$. 
		With this definition, $v_j\to v=u$ in $L^1(0,T)^M$ as $j \to\infty$. 
		
		Now, we can calculate $\TVp(v_j)$ by
		\begin{equation*}
		\label{eq:TVp_step_calc}
			\begin{aligned}
				\TVp(v_j)
				&= \sup\set*{\sum_{i=0}^{n_j-1}v(y_i^j)^\top\parens*{\varphi(t_{i+1}^j)-\varphi(t_i^j)}\given
					\begin{aligned}
						&\varphi\in C_c^1(0,T)^M,\\
						&\norm{\varphi(t)}_{p'}\leq 1 \,\forall t\in(0,T)
					\end{aligned}
				}\\
				&= \sup\set*{\sum_{i=1}^{n_j-1} \varphi(t_i^j)^\top\parens*{v(y_{i-1}^j)-v(y_i^j)}\given
					\begin{aligned}
						&\varphi\in C_c^1(0,T)^M,\\
						&\norm{\varphi(t)}_{p'}\leq 1 \,\forall t\in(0,T)
					\end{aligned}
				}\\
				&= \sum_{i=1}^{n_j-1}\norm*{v(y_i^j)-v(y_{i-1}^j)}_p= \pVp(v_j)\leq \pVp(v).
			\end{aligned}
		\end{equation*}
	Using the lower semicontinuity of $\TVp$, we get $\TVp(u)\leq \pVp(v)$. 
	
	To show $\TVp(u)\geq \eVp(u)$, it is easy to apply the same arguments as in \cite{Ambrosio2000} using \cref{thm:Du}.
	\end{proof}
	
	\cref{thm:eVp_TVp} allows us to work with the more intuitive term $\eVp$, such that we can e.g. easily prove the following:
	\begin{corollary}
		\label{cor:TV_q_p}
		For all $p,q\in[1,\infty]$ and $u\in L^1(0,T)$, there are constants $c,C>0$ such that $c\TV_q(u)\leq \TVp(u)\leq C\TV_q(u)$. 
	\end{corollary}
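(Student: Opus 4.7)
The plan is to reduce everything to the equivalence of the $p$-norm and the $q$-norm on the finite-dimensional space $\R^M$, exploiting the pointwise characterization from \cref{thm:eVp_TVp}. More precisely, I would first invoke \cref{thm:eVp_TVp} to rewrite $\TVp(u) = \eVp(u)$ and $\TV_q(u) = \eV_q(u)$, and in particular to pick a representative $v$ of the equivalence class of $u$ with $\pV_q(v) = \eV_q(u)$ (the infimum is attained by the theorem).

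Next, I would fix constants $c_{p,q}, C_{p,q} > 0$, depending only on $M$, $p$, and $q$, such that
\begin{equation*}
	c_{p,q}\, \norm{w}_q \le \norm{w}_p \le C_{p,q}\, \norm{w}_q \qquad \forall w\in\R^M.
\end{equation*}
For any finite collection $0 < t_1 < \ldots < t_n < T$ I apply this to each increment $w_i := v(t_{i+1}) - v(t_i) \in \R^M$ and sum the inequalities; taking the supremum over all such collections yields
\begin{equation*}
	c_{p,q}\, \pV_q(v) \le \pVp(v) \le C_{p,q}\, \pV_q(v).
\end{equation*}

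Combining this with the definition of $\eVp$ and the attainment, the upper estimate reads
$\TVp(u) = \eVp(u) \le \pVp(v) \le C_{p,q}\, \pV_q(v) = C_{p,q}\, \TV_q(u)$, so one may take $C := C_{p,q}$. For the lower estimate I would simply swap the roles of $p$ and $q$ in the argument (using the representative $\tilde v$ that attains $\pVp(\tilde v) = \eVp(u)$), obtaining $\TV_q(u) \le C_{q,p}\, \TVp(u)$, i.e.\ $c := 1/C_{q,p}$ works.

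There is no serious obstacle here; the only point that needs care is that the minimizing representative used for $p$ is in general not the same as the one used for $q$, so the two inequalities must be proven separately, each time starting from a representative attaining the essential variation for the norm that sits on the larger side of the target inequality. The attainment statement in \cref{thm:eVp_TVp} is what makes this direct; otherwise one would have to work with minimizing sequences and pass to the limit via lower semicontinuity.
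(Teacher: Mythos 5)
Your proposal is correct and follows essentially the same route as the paper: equivalence of the $p$- and $q$-norms on $\R^M$ applied to the increments gives the two-sided bound between the pointwise variations of any representative, and \cref{thm:eVp_TVp} transfers this to $\TVp$ and $\TV_q$. The only (harmless) difference is that you invoke attainment of the infimum and treat the two inequalities via separate minimizing representatives, whereas this is not needed: since the two-sided pointwise bound holds for \emph{every} representative $v$, one can compare the infima directly, which is all the paper's proof does.
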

 \begin{proof}
 	From the equivalence of all norms in $\R^n$, we get the existence of $c,C>0$ such that $c\,\text{pV}_q(v)\leq \pVp(v)\leq C\,\text{pV}_q(v)$ for any $v$ in the equivalence class of $u$. The statement follows from \cref{thm:eVp_TVp}.
 \end{proof}
	\section{Optimality Conditions}
	\label{sec:conditions}
	In order to investigate optimality conditions of \eqref{eq:prob}, 
	we want to adopt the approach from \cite[Chapter 3]{MarkoWachsmuth2022:1}. 
	First, we note that every $u \in \Uad$
	can uniquely be written as
	\begin{equation}
		\label{eq:min_representation}
		u=v^{t,a}:= a_1 \chi_{(t_0, t_1)}+\sum_{j = 2}^n a_j \chi_{[t_{j-1}, t_{j})}
	\end{equation}
	for $n \in \N$,
	$t\in\R^{n-1}$ with $t_i< t_{i+1}$ for all $i\in[n-1]_0$, $t_0 = 0$, $t_n = T$
	and $a=(a_1,\dots,a_n) \in \VV^n$ with $a_i \ne a_{i+1}$ for all $i \in [n-1]$.
	This follows from a slight generalization of
	\cite[Proposition 4.4]{LeyfferManns2021}. We call $v^{t,a}$ with $t$, $a$ defined as above the minimal representation of $u$.

	Next, we want to generalize the full representation (see \cite[Notation 3.3]{MarkoWachsmuth2022:1})
	to the vector-valued case.
	The disadvantage of the minimal representation is
	that admissible controls $w$ in a neighborhood of $u$ with $\TVp(w) = \TVp(u)$
	might not be representable as $w = v^{s, a}$
	with some $s \in \R^{n-1}$
	if we use the data $(n, t, a)$
	from the minimal representation.
	This is illustrated in the next example.
	\begin{example}
		\label{ex:Lambda}
		Let $M=2$, $\VV:=\set{0,1,2}^2$,
		see also \cref{fig:switching_paths}.
		Consider the minimal representation of some $u\in\Uad$, where $a_i = (0,0)^\top$ and $a_{i+1} = (2,2)^\top$ for some $i\in[n-1]$. Then, we have $\norm*{a_{i+1}-a_i}_p = \norm*{a_{i+1}-(1,1)^\top}_p + \norm*{(1,1)^\top-a_i}_p$ for all $p\in[1,\infty]$, 
		i.e., switching first to $(1,1)^\top$ from $a_i$ does not incur any additional cost in $\TVp(u)$. 
		This is possible since $(1,1)^\top$
		is a convex combination of
		$(0,0)^\top$ and $(2,2)^\top$.
		Hence, admissible controls $w$ in the neighborhood of $u$
		might switch from $a_i = (0,0)^\top$
		to $(1,1)^\top$ before going to $a_{i+1} = (2,2)^\top$.
		This is similar to \cite[Example~3.2]{MarkoWachsmuth2022:1}.

		For
		$p\in\{1,\infty\}$, 
		the situation is more complicated,
		since the $p$-norm is no longer strictly convex on $\R^M$.
		Consider the case that
		$a_i = (0, 0)^\top$
		and
		$a_{i+1} = (2,1)^\top$.
		Now, an admissible control $w$ in the neighborhood of $u$
		can visit $(0,1)^\top$ or $(1,1)^\top$
		without an increase of the switching cost $\TVp$.
		However, $(0,1)^\top$ and $(1,1)^\top$ cannot be represented as convex combinations of $(0,0)^\top$ and $(1,2)^\top$.
	\end{example}
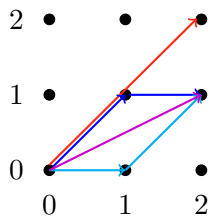
\begin{figure}[h]
	\centering	
	\begin{tikzpicture}
		\foreach \x in {0,1,2}
		\foreach \y in {0,1,2}
		\filldraw (\x,\y) circle (2pt);
		\foreach \x in {0,1,2}
		\node[below] at (\x,-0.2) {\x};
		\foreach \y in {0,1,2}
		\node[left] at (-0.2,\y) {\y};
		\draw[->,BTUred,thick] (0,0.07) -- (1.94,2.01);

			\draw[->,blue,thick] (0.05,0.05) -- (1,1);
			\draw[->,blue,thick] (1.05,1) -- (2,1);
			\draw[->, cyan,thick] (0,0) -- (1,0);
			\draw[->, cyan,thick] (1,0) -- (2,1);
			\draw[->,violet,thick] (0.0,0.0) -- (2,1);
	\end{tikzpicture}
\caption{Switching from $(0,0)$ to $(2,2)$ (red vector) touches $(1,1)$, thus, stopping at $(1,1)$ does not lead to a higher variation. The blue, cyan and violet paths have the same length w.r.t. the 1- and $\infty$-norm.}
\label{fig:switching_paths}
\end{figure}

Thus,
we incorporate convex combinations of adjacent values $a_i$ and $a_{i+1}$
in the representation.
\begin{definition}
	\label{def:full_representation}
	Let $u \in \Uad$ be given.
	A triple $(\hat n, \hat a, \hat t)$
	with $\hat n \in \N$, $\hat a \in \VV^{\hat n}$, $\hat t \in \R^{\hat n - 1}$
	is called the full representation of $u$
	if it satisfies the following properties:
	\begin{enumerate}
		\item\label{def:full:1}
			$u = v^{\hat t, \hat a}$,
		\item\label{def:full:2}
			$\hat t_i \le \hat t_{i + 1}$ for all $i \in [n - 1]_0$ with $\hat t_0 = 0$, $\hat t_n = T$,
		\item\label{def:full:3}
			if $i, j \in [n]$ are given such that $i \le j$ and
			$\hat t_{i-1} < \hat t_i = \hat t_j < \hat t_{j + 1}$
			then
			we can write
			$\hat a_k = (1 - \lambda_k) \hat a_i + \lambda_k \hat a_{j+1}$,
			$k = i+1, \ldots, j$
			for a strictly increasing sequence $\lambda_{i+1}, \ldots, \lambda_{j} \in (0,1)$
			and
			this sequence contains every element from the set
			$\Lambda_{i,j} := \set{ \lambda \in (0,1) \mid (1 - \lambda) \hat a_i + \lambda \hat a_{j+1} \in \VV}$.
	\end{enumerate}
\end{definition}
Property \ref{def:full:3} serves multiple roles in \cref{def:full_representation}.
First, if there appear equal values in $\hat t$, then the corresponding values in $\hat a$ need to contain all convex combinations of $\hat a_i$ and $\hat a_{j+1}$ in the admissible set $\VV$ in correct order.
Second, if $i=j$ (i.e., $\hat t_i$ appears only once in $\hat t$), then no convex combination of $\hat a_i$ and $\hat a_{j+1} = \hat a_{i+1}$ belongs to $\VV$. Indeed, the mentioned sequence $\lambda_{i+1}, \ldots, \lambda_j$
is the empty sequence for $i = j$
and, consequently,
the set $\Lambda_{i,i}$ has to be empty in this case.
Finally, $i=j$ also implies that $\hat a_i \ne \hat a_{i+1}$, as otherwise we would have $\Lambda_{i,i} = (0,1)$.

Using the preceding arguments, it becomes clear that the full representation of a feasible $u$
is unique.
Note that for $M=1$, the representation is consistent with \cite[Notation 3.3]{MarkoWachsmuth2022:1}, which is why we will denote it in the same way.

Given the minimal representation of $u$,
one can construct the full representation
by duplicating time steps $t_i$
whenever there are convex combinations of $a_i$ and $a_{i + 1}$
belonging to $\VV$.
This is similar to the construction in the proof of \cite[Lemma~3.1]{MarkoWachsmuth2022:1}.

It is easy to check that using the data $(n,a,t)$ of the minimal representation $v^{t,a}$ of $u$, we can calculate $\TVp(u)$ with the pointwise $p$-variation by
\[\TVp(u) = \sum_{i=1}^{n-1} \norm*{a_{i+1}-a_i}_p.\]
We prove the statement for the full representation in a Lemma.
	\begin{lemma}
		\label{lem:TVp_full_repr}
		Let $u\in\Uad$ and $(\hat n,\hat a,\hat t)$ be the data of the full representation of $u$. Then, we have
		\[\TVp(u) = \sum_{k=1}^{\hat n-1}\norm*{\hat a_{k+1} - \hat a_k}_p.\]
		Thus $\TVp$ is independent of the switching time vector of the full representation.
	\end{lemma}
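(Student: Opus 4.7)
The plan is to reduce the claim to the minimal representation formula
\[\TVp(u) = \sum_{i=1}^{n-1}\norm{a_{i+1}-a_i}_p,\]
which is recorded just before the lemma, and to close the gap via a telescoping argument exploiting the collinearity built into property \ref{def:full:3} of \cref{def:full_representation}.

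First I would group the indices of the full representation into maximal blocks of equal switching times. Writing the minimal representation switching times as $t_1 < \cdots < t_{n-1}$, for each $\ell \in [n-1]$ there exist $i_\ell \le j_\ell$ with $\hat t_{i_\ell} = \cdots = \hat t_{j_\ell} = t_\ell$ and $\hat t_{i_\ell-1} < t_\ell < \hat t_{j_\ell+1}$ (using $\hat t_0 = 0$, $\hat t_{\hat n} = T$). Consecutive blocks glue together as $j_\ell + 1 = i_{\ell+1}$, so the indices $k \in \{1,\ldots,\hat n-1\}$ are partitioned by $\{i_\ell,\ldots,j_\ell\}$ for $\ell = 1, \ldots, n-1$. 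The $\hat a$-values at the block boundaries agree with the minimal representation: $\hat a_{i_\ell} = a_\ell$ and $\hat a_{j_\ell+1} = a_{\ell+1}$, since $\hat a_{i_\ell}$ is the value of $u$ on the positive-measure interval $[\hat t_{i_\ell-1}, t_\ell)$.

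Next, applying property \ref{def:full:3} to $(i_\ell, j_\ell)$ produces a strictly increasing sequence $\lambda_{i_\ell+1}, \ldots, \lambda_{j_\ell} \in (0,1)$ with $\hat a_k = (1 - \lambda_k)\hat a_{i_\ell} + \lambda_k \hat a_{j_\ell+1}$. Extending by $\lambda_{i_\ell} := 0$ and $\lambda_{j_\ell+1} := 1$, one obtains
\[\hat a_{k+1} - \hat a_k = (\lambda_{k+1} - \lambda_k)(\hat a_{j_\ell+1} - \hat a_{i_\ell})\]
for all $k \in \{i_\ell, \ldots, j_\ell\}$; since $\lambda_{k+1} - \lambda_k > 0$, the $p$-norm factors out the positive scalar and the sum telescopes to
\[\sum_{k=i_\ell}^{j_\ell}\norm{\hat a_{k+1} - \hat a_k}_p = (\lambda_{j_\ell+1} - \lambda_{i_\ell})\norm{\hat a_{j_\ell+1} - \hat a_{i_\ell}}_p = \norm{a_{\ell+1} - a_\ell}_p.\]

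Summing over the blocks and invoking the minimal representation formula yields
\[\sum_{k=1}^{\hat n - 1}\norm{\hat a_{k+1} - \hat a_k}_p = \sum_{\ell=1}^{n-1}\norm{a_{\ell+1} - a_\ell}_p = \TVp(u),\]
as claimed; independence from $\hat t$ follows since the expression only involves $\hat a$. The argument handles isolated switching times ($i_\ell = j_\ell$) automatically, with no intermediate $\lambda_k$'s and the block contributing the single term $\norm{\hat a_{i_\ell+1} - \hat a_{i_\ell}}_p$. I do not anticipate a significant obstacle here; the only technical care is the index bookkeeping that identifies block boundaries of the full representation with the values of the minimal representation.
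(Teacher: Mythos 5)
Your proposal is correct and follows essentially the same route as the paper's proof: reduce to the known formula for the minimal representation and handle blocks of coinciding switching times via the telescoping identity $\sum_{k=i}^{j}\norm{\hat a_{k+1}-\hat a_k}_p = \norm{\hat a_{j+1}-\hat a_i}_p$, which holds because property (iii) makes all increments positive multiples of the same vector. The paper merely states this identity as "quickly checked," while you spell out the $\lambda$-bookkeeping explicitly.
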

	\begin{proof}		
		By the definition of the full representation, if $\hat t_i = \hat t_j$ for $i<j$, then $\hat a_{i+1},\dots,\hat a_{j}$ are convex combinations of $\hat a_i$, $\hat a_{j+1}$. Thus, it can be quickly checked that
		\begin{equation}
			\label{proof:equ1}
			\norm{\hat a_i - \hat a_{j+1}}_p = \sum_{k=i}^j \norm{\hat a_k - \hat a_{k+1}}_p.
		\end{equation}
		Let $I$ be the index set containing $i+1,\dots,j$ for all $i<j$ with $\hat t_i = \hat t_j$ and let $\{i_1,\dots,i_n\}:= [\hat n-1]\setminus I$. Due to the fact that the statement is true for the minimal representation, we have
		\[\TVp(u) = \sum_{l=1}^{n-1} \norm{\hat a_{i_l} - \hat a_{i_{l+1}}}_p.\]
		Clearly, using \eqref{proof:equ1}, we can incorporate the indices of $I$ into this sum to get the statement.
	\end{proof}
	The following auxiliary result will be useful in the proof of the next theorem.
	\begin{lemma}
		\label{lem:similar_local_points}
		Let $u$ be a feasible point of \eqref{eq:prob}, $(\hat n,\hat t,\hat a)$ be the data of the full representation of $u$ and $p\in(1,\infty)$. 
		Then, there exist $\varepsilon,c,C > 0$ such that for all $w\in\Uad$
		with $\TVp(w) \le \TVp(u)$ and $\norm{w - u}_{L^1} \le \varepsilon$
		there exists $\hat s \in \R^{{\hat n} - 1}$
		with
		$0 <  \hat s_1 \le \ldots \le \hat s_{{\hat n}-1} < T$
		and
		$w = v^{\hat s,\hat a}$.
		Furthermore, we have $\TVp(w) = \TVp(u)$ and
		\begin{equation*}
			c \norm{\hat s - \hat t}_{2}
			\le
			\norm{w - u}_{L^1}
			\le
			C \norm{\hat s - \hat t}_{2}.
		\end{equation*}
	\end{lemma}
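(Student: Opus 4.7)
Let $(n, t, a)$ denote the minimal-representation data of $u$, so that $0 = t_0 < t_1 < \dots < t_n = T$ and $a_i \ne a_{i+1}$. The strategy is to show that, for $\varepsilon$ small, any admissible $w$ with $\norm{w - u}_{L^1} \le \varepsilon$ and $\TVp(w) \le \TVp(u)$ takes the form $v^{\hat s, \hat a}$ with $\hat s$ close to $\hat t$, from which the asserted norm equivalence can be read off. The main difficulty will be the path-matching step, which uses strict convexity of the $p$-norm for $p\in(1,\infty)$ to pin down $w$'s switching values exactly as the entries of $\hat a$; this is also why $p\in\{1,\infty\}$ must be excluded, cf.\ \cref{ex:Lambda}.

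\textbf{Localisation.} Since $\VV$ is finite, distinct elements are separated by some $d_0>0$ in every norm. Fix $\delta>0$ smaller than half of $\min_i(t_i-t_{i-1})$. The $L^1$-proximity to $u\equiv a_i$ on $(t_{i-1}+\delta,t_i-\delta)$ rules out deviations of large total measure, while the bound $\TVp(w)\le\TVp(u)$ forbids short excursions (each such excursion requires a round trip whose $\TVp$-cost cannot be absorbed by the budget). Combining these two effects yields $w=a_i$ a.e.\ on $(t_{i-1}+\delta,t_i-\delta)$ for every $i\in[n]$, provided $\varepsilon$ is small enough.

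\textbf{Path matching.} On each window $(t_i-\delta,t_i+\delta)$, the minimal representation of $w$ produces a sequence $a_i = b_i^0 \to\dots\to b_i^{m_i} = a_{i+1}$ in $\VV$ with contribution $\sum_k\norm{b_i^k-b_i^{k-1}}_p$ to $\TVp(w)$. Summing the triangle-inequality bound $\sum_k\norm{b_i^k-b_i^{k-1}}_p\ge\norm{a_{i+1}-a_i}_p$ over $i$ and invoking the budget $\TVp(w)\le\TVp(u)=\sum_i\norm{a_{i+1}-a_i}_p$ forces equality in every window. Since the $p$-norm is strictly convex for $p\in(1,\infty)$, each increment $b_i^k-b_i^{k-1}$ must then be a nonnegative multiple of $a_{i+1}-a_i$; together with $b_i^k\in\VV$ this identifies $b_i^0,\dots,b_i^{m_i}$ with an ordered subsequence of the block of $\hat a$ associated with switch $i$ (property~\ref{def:full:3} of \cref{def:full_representation}). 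Missing intermediate values correspond to duplicated consecutive components of $\hat s$, so $w=v^{\hat s,\hat a}$ with $0<\hat s_1\le\dots\le\hat s_{\hat n-1}<T$, and $\TVp(w)=\TVp(u)$ is then immediate from \cref{lem:TVp_full_repr}.

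\textbf{Norm equivalence.} With $\sigma_k(r):=\chi_{[\hat s_k,T]}(r)-\chi_{[\hat t_k,T]}(r)$ we have the identity
\[
	v^{\hat s,\hat a}(r)-v^{\hat t,\hat a}(r)=\sum_{k=1}^{\hat n-1}(\hat a_{k+1}-\hat a_k)\,\sigma_k(r).
\]
The triangle inequality and Cauchy--Schwarz yield the upper bound $\norm{w-u}_{L^1}\le C\norm{\hat s-\hat t}_2$. For the lower bound, group the indices into blocks $\{i,\dots,j\}$ with $\hat t_i=\dots=\hat t_j$. Inside such a block, property~\ref{def:full:3} of \cref{def:full_representation} implies that all increments $\hat a_{k+1}-\hat a_k$ are positive multiples of the common vector $\hat a_{j+1}-\hat a_i$, and for every fixed $r$ the nonvanishing $\sigma_k(r)$ within the block share the same sign ($+1$ for $r<\hat t_i$, $-1$ for $r>\hat t_i$), so no cancellation occurs when taking the $\ell^1$-norm of the sum. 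Between blocks the supports of the $\sigma_k$ are pairwise disjoint once $\norm{\hat s-\hat t}_\infty$ is smaller than half the minimal gap between distinct $\hat t$-values. This produces $\norm{w-u}_{L^1}=\sum_k\alpha_k\abs{\hat s_k-\hat t_k}$ with strictly positive weights $\alpha_k$, which is equivalent to $\norm{\hat s-\hat t}_2$ with constants depending only on $\hat a$ and on the gaps between distinct entries of $\hat t$.
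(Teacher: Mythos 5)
Your proposal is correct and follows essentially the same route as the paper's proof: localisation of $w$ to windows around the switching times, exhaustion of the $\TVp$ budget forcing equality in the triangle inequality, strict convexity of the $p$-norm for $p\in(1,\infty)$ identifying the intermediate values with the block structure of the full representation (property \ref{def:full:3} of \cref{def:full_representation}), and the jump decomposition for the two-sided $L^1$-estimate. The only cosmetic differences are that the paper obtains the per-window lower bound $\TVp(w,(\tilde t_j,\tilde t_{j+1}))\ge\norm{a_{j+2}-a_{j+1}}_p$ via an explicit dual test function rather than your pointwise-variation/triangle-inequality argument based on \cref{thm:eVp_TVp}, and that you write out the localisation and the norm equivalence which the paper delegates to \cite[Lemma 3.4]{MarkoWachsmuth2022:1}.
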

\begin{proof}
	Let $(n,a,t)$ be the data of the minimal representation of $u$. We define $\varepsilon$ the same way as $\hat \varepsilon$ in the proof of \cite[Lemma 3.4]{MarkoWachsmuth2022:1} and thus, get the existence of $(\alpha_j,\beta_j)\subset[t_j,t_{j+1}]$ for all $j\in[n-1]_0$ such that $w=u=a_{j+1}$ in $(\alpha_j,\beta_j)$ for all $w\in\Uad$ with $\norm{u-w}_{L^1}\leq \varepsilon$ and $\TVp(w)\leq\TVp(u)$. Let $\tilde t_j$ be the midpoint of $(\alpha_j,\beta_j)$. We define $\varphi\in C_c^1(\tilde t_j,\tilde t_{j+1})^M$ as a 
	continuous function such that $\norm{\varphi(\tau)}_{p'}\leq 1$ for all $\tau\in(\tilde t_{j+1},\beta_{j}]\cup [\alpha_{j+1},\tilde t_{j+1})$ and
	\[\varphi_i(\tau) = \frac{(a_{j+1,i}-a_{j+2,i})\abs{a_{j+1,i}-a_{j+2,i}}^{p-2}}{\norm{a_{j+1}-a_{j+2}}_p^{p/p'}}\quad\forall \tau\in(\beta_j,\alpha_{j+1}),\ i\in[M],\] where $\varphi_i$ and $a_{j,i}$ denote the $i$-th components of $\varphi$ and $a_j$ and $p'$ is the Hölder conjugate of $p$. Observe that for any $x\in\R$, we have $\abs*{x\cdot\abs{x}^{p-2}}^{p'} = \abs{x}^p$ and thus,  \[\norm*{\varphi(\tau)}_{p'}^{p'} = \frac{1}{\norm{a_{j+1}-a_{j+2}}_p^{p}}\sum_{i = 1}^M \abs*{a_{j+1,i}-a_{j+2,i}}^p= 1
	\quad\forall\tau\in (\beta_j,\alpha_{j+1}).\]
	 Then,
	\[\begin{aligned}
		\TVp(w,(\tilde t_j,\tilde t_{j+1}))&\geq \int_{\tilde t_j}^{\tilde t_{j+1}} w^\top\varphi'\,\d \tau = \int_{\tilde t_j}^{\beta_j} a_{j+1}^\top\varphi'\,\d \tau + \int_{\alpha_{j+1}}^{\tilde t_{j+1}} a_{j+2}^\top\varphi'\,\d \tau\\
		&= \frac{1}{\norm{a_{j+1}-a_{j+2}}_p^{p/p'}}\sum_{i = 1}^M \abs*{a_{j+1,i}-a_{j+2,i}}^p = \norm{a_{j+1}-a_{j+2}}_p^{p-p/p'}\\
		&=\norm{ a_{j+1}- a_{j+2}}_p=  \norm{ a_{j+2}- a_{j+1}}_p.
	\end{aligned}\]
	It follows that
	\[\begin{aligned}
\TVp(u)&\geq \TVp(w) = \abs{Dw}_p((0,T))\geq \sum_{j=0}^{n-2}\abs{Dw}_p((\tilde t_j,\tilde t_{j+1})) = \sum_{j=0}^{n-2}\TVp(w,(\tilde t_j,\tilde t_{j+1}))\\
&\geq \sum_{j=0}^{n-2}\norm{a_{j+2}-a_{j+1}}_p = \sum_{j=1}^{n-1}\norm{a_{j+1}-a_j}_p = \TVp(u).
	\end{aligned}\]
Thus, $\TVp(u) = \TVp(w)$ and $\TVp(w,(\tilde t_j,\tilde t_{j+1})) = \norm{a_{j+2}-a_{j+1}}_p$. 

Now, consider the minimal representation $(\bar n,\bar a,\bar t)$ of $w$. We know there is an $l\in[\bar n-1]$ and an $m>0$ such that $\bar a_l = a_j$, $\bar a_{l+m} = a_{j+1}$ for every $j\in[n-1]$. For any $j$ with $m>1$, we have
\[\TVp(w,(\tilde t_j,\tilde t_{j+1})) = \sum_{i = l}^{l+m-1}\norm{\bar a_{i+1}-\bar a_i}_p\overset{!}{=} \norm*{\sum_{i = l}^{l+m-1}(\bar a_{i+1}-\bar a_i)}_p=\norm{a_{j+2}-a_{j+1}}_p.\]
For $p\in(1,\infty)$, this can only be true if $\bar a_{l+1},\dots,\bar a_{l+m-1}$ are representable by a convex combination of $a_{j+1}$ and $a_j$. Thus, the full representation of $w$ is given by the tuple $(\hat n, \hat a,\hat s)$, i.e. $w=v^{\hat s,\hat a}$.  
	
The rest of the proof is analogous to the proof of \cite[Lemma 3.4]{MarkoWachsmuth2022:1},  in which $\abs{\hat a_{j+1}-\hat a_j}$  has to be replaced with $\norm{\hat a_{j+1}-\hat a_{j}}_1$,  where we utilize $\norm{\hat a_{j+1}-\hat a_{j}}_1\in \left[\max_{i,j\in[d]}\set{\norm{\nu_i-\nu_j}_1}\right]$ for $\hat a_{j+1}\neq \hat a_{j}$, $j\in[\hat n-1]$.
\end{proof}
As indicated by \cref{ex:Lambda}, this result does not hold for $p\in\{1,\infty\}$, as we cannot guarantee that $w$ adheres to the switching pattern $\hat a$.

To gain optimality conditions of \eqref{eq:prob},
we fix $n\in \N$ and $a=(a_1,\dots,a_n)$ with $a_i\in\R^M$ for all $i\in[n]$, 
and we consider the problem
\begin{equation}
	\label{eq:ST}
	\tag{ST($ n, a$)}
	\begin{aligned}
		\text{Minimize} \quad & F(v^{t, a}) \\
		\text{with respect to} \quad & t \in \R^{n-1},\\
		\text{such that} \quad
		& 0 \le t_1 \le \dots \le t_{n-1} \le T.
	\end{aligned}
\end{equation}
The feasible set of \eqref{eq:ST} will be denoted by \begin{equation*}
	\FF := \set{ t \in \R^{n - 1} \given 0 \le t_1 \le \dots \le t_{n - 1} \le T }.
\end{equation*}
Now, using the previous lemma, we can show that for $p\notin\{1,\infty\}$ local optimality of $\eqref{eq:prob}$ and $\eqref{eq:ST}$ are equivalent:
\begin{theorem}
	\label{thm:equivalence}
	Let $u \in \Uad$ be given and consider the data $(\hat n, \hat a, \hat t)$ of its full representation.
	Then, $u$ is locally optimal for \eqref{eq:prob} with $p\in(1,\infty)$ in $L^1(0,T)^M$
	if and only if $\hat t$ is locally optimal for \STfull.
	Moreover, $u$ satisfies a local quadratic growth condition for \eqref{eq:prob} with $p\in(1,\infty)$
	in $L^1(0,T)^M$
	if and only if a local quadratic growth condition is valid for \STfull\ at $\hat t$, i.e. the existence of constants $\varepsilon,\eta > 0$
	such that
	\begin{equation}
		\label{eq:quad_growth_P}
		F(w) + \beta \TVp(w)
		\ge
		F(u) + \beta \TVp(u) + \frac\eta2 \norm{w - u}_{L^1}^2
		\quad
		\forall w \in \Uad, \norm{w - u}_{L^1} \le \varepsilon
	\end{equation}
	is equivalent to the existence of constants $\tilde\varepsilon, \tilde\eta > 0$
	with
	\begin{equation}
		\label{eq:quad_growth_STfull}
		F(v^{\hat s,\hat a})
		\ge
		F(u) + \frac{\tilde\eta}2 \norm{\hat s - \hat t}_{2}^2
		\qquad
		\forall \hat s \in \FF,
		\norm{\hat s - \hat t}_{2} \le \tilde\varepsilon
		.
	\end{equation}
\end{theorem}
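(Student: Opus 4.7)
The plan is to use \cref{lem:similar_local_points} and \cref{lem:TVp_full_repr} as a two-way bridge between the $L^1$-neighborhood of $u$ in $\Uad$ and the Euclidean neighborhood of $\hat t$ in $\FF$. The bridge works cleanly only for admissible perturbations $w$ with $\TVp(w)\le\TVp(u)$, so the reverse direction will require a separate argument to handle controls whose $\TVp$ strictly exceeds $\TVp(u)$.

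For the forward direction, given $\hat s\in\FF$ close to $\hat t$, set $w:=v^{\hat s,\hat a}\in\Uad$. A triangle-inequality argument gives $\TVp(w)\le \sum_k\norm{\hat a_{k+1}-\hat a_k}_p = \TVp(u)$, because any additional equalities in $\hat s$ beyond those in $\hat t$ merely merge adjacent jumps, and merging can only decrease the sum of $p$-norms. Since $\norm{w-u}_{L^1}\le C\norm{\hat s-\hat t}_{2}$ is small by \cref{lem:similar_local_points}, local optimality of $u$ for \eqref{eq:prob} delivers $F(v^{\hat s,\hat a})\ge F(u)+\beta\bigl(\TVp(u)-\TVp(w)\bigr)\ge F(u)$; the quadratic growth version \eqref{eq:quad_growth_STfull} follows analogously with $\tilde\eta=\eta c^2$, using the lower bound $c\norm{\hat s-\hat t}_{2}\le\norm{w-u}_{L^1}$ from \cref{lem:similar_local_points}. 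Conversely, take $w\in\Uad$ with $\norm{w-u}_{L^1}\le\varepsilon$. If $\TVp(w)\le\TVp(u)$, \cref{lem:similar_local_points} writes $w=v^{\hat s,\hat a}$ with $\TVp(w)=\TVp(u)$ and $\norm{\hat s-\hat t}_{2}\le\norm{w-u}_{L^1}/c$, so the corresponding property of $\hat t$ for \STfull transfers back to $w$ through the Lipschitz equivalence of the two norms.

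The remaining case $\TVp(w)>\TVp(u)$ is not covered by \cref{lem:similar_local_points} and is the main obstacle. The key observation is that the set $\set{\TVp(u')\given u'\in\Uad}$ is discrete: each element is a finite sum of terms from the finite set $\set{\norm{\nu_i-\nu_j}_p\given i\neq j}$, and the number of summands below any fixed bound $B$ is at most $B/s_0$, where $s_0:=\min_{i\neq j}\norm{\nu_i-\nu_j}_p>0$. Hence there is a uniform gap $\delta_0>0$ with $\TVp(w)\ge\TVp(u)+\delta_0$ whenever $\TVp(w)>\TVp(u)$. Combined with lower semicontinuity of $F$ at $u$, which provides $F(w)\ge F(u)-\beta\delta_0/2$ on a sufficiently small $L^1$-neighborhood, one obtains $F(w)+\beta\TVp(w)\ge F(u)+\beta\TVp(u)+\beta\delta_0/2$. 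This uniform positive slack dominates any quadratic term $(\tilde\eta/2)\norm{w-u}_{L^1}^2$ once $\varepsilon$ is chosen so that $\tilde\eta\varepsilon^2\le\beta\delta_0$, closing both the local optimality and the quadratic growth parts.
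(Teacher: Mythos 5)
Your proof is correct and follows essentially the same route as the paper, which establishes the theorem by carrying over the scalar argument of \cite[Theorem 3.5]{MarkoWachsmuth2022:1} and invokes \cref{lem:similar_local_points} precisely for the implication ``$\Leftarrow$'', exactly as you do via the two-sided $L^1$-versus-$\ell^2$ estimates and the inequality $\TVp(v^{\hat s,\hat a})\le\TVp(u)$ from \cref{lem:TVp_full_repr}. Your treatment of the case $\TVp(w)>\TVp(u)$ through the local finiteness of the achievable $\TVp$-values (gap $\delta_0>0$ above $\TVp(u)$) together with lower semicontinuity of $F$ is the appropriate replacement for the integer gap available in the scalar setting, i.e.\ it is the ``straightforward modification'' the paper alludes to.
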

\begin{proof}
	The theorem can be proven analogously to \cite[Theorem 3.5]{MarkoWachsmuth2022:1} by some straightforward modification. In particular, \cref{lem:similar_local_points} is employed for the implication ``$\Leftarrow$''.
\end{proof}
Note that local optimality for $\eqref{eq:prob}$ also implies local optimality for \STfull\ in the cases $p\in\{1,\infty\}$. However, the other direction cannot be shown, since $\hat a$ in $v^{\hat t,\hat a}$ could be altered without changing $\TVp(u)$, see \cref{ex:Lambda}.

In order to get optimality conditions of first and second order, we need to assume that $F\colon L^1(0,T)^M\to \R$ is twice Fréchet differentiable, where we identify $F'(u)$ with $\nabla F(u)\in L^{\infty}(0,T)^M$ and $F''(u)$ with $\nabla^2 F(u)\in L^\infty((0,T)^2;\R^{M\times M})$ such that
\begin{equation*}
	F''(u)[v,w] = \int_0^T\int_0^T v(r)^\top \nabla^2 F(u)(r,s) w(s)\,\d r\,\d s\quad\forall v,w\in L^1(0,T)^M.
\end{equation*}

Optimality conditions for the one-dimensional version of \STfull\ (i.e., $M=1$) were already investigated in \cite{MarkoWachsmuth2022:1}. For the sake of brevity, we present the results for $M>1$ and highlight the differences in the derivation of these conditions in the proof of the following theorem.

\begin{theorem}
	\label{thm:opt_conditions}
	Let $u \in \BV(0,T)^M$ be feasible for \eqref{eq:prob}
	and we denote by $(n,a,t)$
	the minimal representation for $u$. Let $J$ be the set of all $j\in[n-1]$ where $\Lambda_j:=\set{\lambda\in(0,1)\given \lambda a_j + (1-\lambda)a_{j+1}\in\VV}$ is nonempty.
	We assume that $F \colon L^1(0,T) \to \R$
	is twice Fréchet differentiable
	with 
	$\nabla F(u) \in C^1([0,T])^M$
	and $\nabla^2 F(u) \in C([0,T]^2;\R^{M\times M})$ and
	we define
	$\mu_j := a_{j} - a_{j+1}$ for $j = 1,\ldots,n-1$.
	If $u$ is a local minimizer of \eqref{eq:prob} in $L^1(0,T)^M$, then the system
	\begin{subequations}
		\label{eq:SONC}
		\begin{align}
			\label{eq:SONC_1}
			\mu_j^\top\nabla F(u)(t_j) &= 0 \qquad\forall j = 1,\ldots, n-1, \\
			\label{eq:SONC_2}
			\mu_j^\top(\nabla F(u))'(t_j) &\ge 0 \qquad\forall j \in J, \\
			\label{eq:SONC_3}
			\sum_{j = 1}^{n-1} \mu_j^\top (\nabla F(u))'(t_j) \tau_j^2
			+
			\sum_{j,k = 1}^{n-1} \mu_j^\top \nabla^2 F(u)(t_j, t_k)\mu_k \tau_j \tau_k
			&\ge0
			\qquad\forall \tau \in \R^{n-1}
		\end{align}
	\end{subequations}
	is satisfied.
	
	Let $p\in(1,\infty)$. Then,
	$u$ is a local minimizer of \eqref{eq:prob} satisfying
	a quadratic growth condition in $L^1(0,T)^M$
	if and only if
	\begin{subequations}
		\label{eq:SOEC}
		\begin{align}
			\label{eq:SOEC_1}
			\mu_j^\top\nabla F(u)(t_j) &= 0 \qquad\forall j = 1,\ldots, n-1, \\
			\label{eq:SOEC_2}
			\mu_j^\top(\nabla F(u))'(t_j) &> 0 \qquad\forall j \in J, \\
			\label{eq:SOEC_3}
			\sum_{j = 1}^{n-1} \mu_j^\top (\nabla F(u))'(t_j) \tau_j^2
			+
			\sum_{j,k = 1}^{n-1} \mu_j^\top \nabla^2 F(u)(t_j, t_k)\mu_k \tau_j \tau_k
			&>0
			\qquad\forall \tau \in \R^{n-1} \setminus \set{0}
			.
		\end{align}
	\end{subequations}
\end{theorem}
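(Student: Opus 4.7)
The strategy is to leverage \cref{thm:equivalence} to transfer the analysis from $u$ being a local minimizer of \eqref{eq:prob} to $\hat t$ being a local minimizer of the finite-dimensional, polyhedrally constrained problem \STfull, where $(\hat n, \hat a, \hat t)$ is the full representation of $u$. Standard first- and second-order conditions for a smooth minimization over the polyhedron $\FF$ can then be applied at $\hat t$, and the resulting inequalities are translated back to the minimal representation $(n, a, t)$. Since $0 < \hat t_1$ and $\hat t_{\hat n - 1} < T$, the only active inequality constraints at $\hat t$ are the cluster equalities $\hat t_k = \hat t_{k+1}$, and these occur precisely at the indices associated with those $j \in J$ for which $\Lambda_j \ne \emptyset$.

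The first step is to compute the derivatives of $G(t) := F(v^{t,a})$ in the switching times. Since a perturbation $t_i \mapsto t_i + \varepsilon$ changes $v^{t,a}$ by $\mu_i = a_i - a_{i+1}$ on an interval of length $|\varepsilon|$ near $t_i$, a second-order Taylor expansion of $F$, combined with the assumed regularity $\nabla F(u) \in C^1([0,T])^M$ and $\nabla^2 F(u) \in C([0,T]^2;\R^{M\times M})$, yields $\partial G/\partial t_i = \mu_i^\top \nabla F(u)(t_i)$, off-diagonal Hessian entries $\mu_i^\top \nabla^2 F(u)(t_i, t_k) \mu_k$, and diagonal entries that additionally contain the first-order correction $\mu_i^\top (\nabla F(u))'(t_i)$. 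This matches exactly the quadratic form appearing in \eqref{eq:SONC_3} and \eqref{eq:SOEC_3}.

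The key observation for translating back to the minimal representation is that within the cluster of the full representation corresponding to a single switch $j \in J$ of the minimal representation, the successive jumps satisfy $\hat\mu_k = \delta_k \mu_j$ for weights $\delta_k > 0$ summing to one (namely, the successive gaps in $\{0\} \cup \Lambda_j \cup \{1\}$). In particular every gradient component of $G$ inside such a cluster is a positive multiple of $\mu_j^\top \nabla F(u)(t_j)$, so summing the KKT conditions over the cluster yields \eqref{eq:SONC_1} and forces all Lagrange multipliers to vanish. The critical cone at $\hat t$ therefore coincides with the linearized feasible cone, and every $\tau$ in it decomposes uniquely as $\tau = \tau^{\mathrm{col}} + \tau^{\mathrm{int}}$, with $\tau^{\mathrm{col}}$ constant on each cluster and $\tau^{\mathrm{int}}$ satisfying $\sum_{k \in \mathrm{cluster}(j)} \tau^{\mathrm{int}}_k \delta_k = 0$ for every $j$. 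A short calculation using $\hat\mu_k = \delta_k \mu_j$ then shows that the Hessian quadratic form splits additively as
\begin{equation*}
  Q(\tau) = Q_{\min}(\tau^{\mathrm{col}}) + \sum_{j \in J} \bigl[\mu_j^\top (\nabla F(u))'(t_j)\bigr] \sum_{k \in \mathrm{cluster}(j)} (\tau^{\mathrm{int}}_k)^2 \delta_k,
\end{equation*}
where $Q_{\min}$ agrees with the form in \eqref{eq:SONC_3} under the natural identification of $\tau^{\mathrm{col}}$ with a vector in $\R^{n-1}$. Testing the second-order necessary condition with $\tau^{\mathrm{int}} = 0$ then yields \eqref{eq:SONC_3}, while testing with $\tau^{\mathrm{col}} = 0$ and $\tau^{\mathrm{int}}$ supported on the cluster of a single $j \in J$ (which is feasible because the positive weights $\delta_k$ admit a nondecreasing perturbation with zero $\delta$-weighted mean) yields \eqref{eq:SONC_2}.

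The sufficient direction is driven by the same decomposition: under \eqref{eq:SOEC_2} and \eqref{eq:SOEC_3}, the split formula gives $Q(\tau) > 0$ for every nonzero $\tau$ in the critical cone, which is the standard second-order sufficient condition for \STfull\ at $\hat t$; this implies the local quadratic growth \eqref{eq:quad_growth_STfull}, and \cref{thm:equivalence} transports it back to \eqref{eq:prob}. The main obstacle is the combinatorial bookkeeping of the cluster decomposition: matching cluster indices of the full representation to the switches $j \in [n-1]$ of the minimal representation, verifying that collapsed and internal directions jointly span the critical cone, and realizing the required internal perturbations as feasible directions. Once these identifications are set up, the remaining manipulations parallel the one-dimensional argument in \cite{MarkoWachsmuth2022:1}, with scalar jumps replaced by the vector jumps $\mu_j$ and scalar derivatives by inner products with $\nabla F(u)$ and $\nabla^2 F(u)$.
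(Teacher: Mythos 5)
Your proposal follows essentially the same route as the paper's proof: reduce to the switching-time problem at the full representation via \cref{thm:equivalence}, Taylor-expand $F(v^{\hat t+\tau,\hat a})$ to get the gradient entries $\hat\mu_i^\top\nabla F(u)(\hat t_i)$ and the Hessian form, exploit the cluster relation $\hat\mu_k=\delta_k\mu_j$ with $\sum_k\delta_k=1$, and compare the two quadratic forms — your exact ``collapsed plus internal'' split is precisely the paper's identity, whose Jensen gap is exactly your weighted variance term, and your test directions (constant on clusters, respectively zero-weighted-mean nondecreasing within a single cluster) reproduce \eqref{eq:SONC_2}--\eqref{eq:SONC_3}. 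The only step to make explicit for the stated equivalence is the converse direction that quadratic growth for \STfull\ forces the strict inequalities \eqref{eq:SOEC_2}--\eqref{eq:SOEC_3} (by the same scaled test directions applied to the growth estimate), which both your outline and the paper leave to the standard argument inherited from the scalar case.
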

\begin{proof}
	For some $\bar n \in \N$ and a feasible point $\bar t$ of $\STbar$ with $\bar a=(\bar a_1,\dots,\bar a_{\bar n})\in \VV^{\bar n}$, let $\TT_{\bar n, \bar t}:= \set[\big]{
		\tau \in \R^{\bar n-1}
		\given
		\forall k \in [\bar n-2]
		:
		\bar t_k = \bar t_{k+1}
		\;\Rightarrow\;
		\tau_k \le \tau_{k+1}
	}$. Imitating \cite[Theorem 3.6]{MarkoWachsmuth2022:1}, we get the Taylor expansion 
	\begin{align*}
		F(v^{\bar t + \tau,\bar a})
		&=
		F(v^{\bar t,\bar a})
		+
		\parens*{ \sum_{j = 1}^{\bar n-1} \bar \mu_j^\top \nabla F(v^{\bar t,\bar a})(\bar t_j) \tau_j }
		\\&\quad
		+
		\frac{1}{2} \parens*{
			\sum_{j = 1}^{\bar n-1} \bar \mu_j^\top (\nabla F(v^{\bar t,\bar a}))'(\bar t_j) \tau_j^2
			+
			\sum_{j,k = 1}^{\bar n-1} \bar \mu_j^\top \nabla^2 F(v^{\bar t,\bar a})(\bar t_j, \bar t_k)\bar \mu_k \tau_j \tau_k
		}
		\\&\quad
		+
		\oo( \norm{\tau}^2 )
		\qquad\text{as } \norm{\tau} \to 0,
	\end{align*}
where $\bar \mu_j = \bar a_j - \bar a_{j+1}$, $j\in[\bar n-1]$ and the conventions $\bar t_0 = \tau_0 = \tau_n = 0$, $\bar t_n = T$ are used. Assume $\bar t$ is a local minimizer of \hyperref[eq:ST]{\textup{(ST($\bar n, \bar a$))}}. Thus, applying the arguments from \cite[Lemma 3.7]{MarkoWachsmuth2022:1} gives
\begin{subequations}
	\label{eq:necessary}
	\begin{align}
		\label{eq:necessary_1}
		&\mathrlap{\forall j = 1,\ldots,\bar n-1:}
		&
		\bar \mu_j^\top\nabla F(v^{\bar t,\bar a})(\bar t_j)& = 0
		,
		\\
		\label{eq:necessary_2}
		&\forall \tau \in \TT_{\bar n,\bar t}:
		&
		\sum_{j = 1}^{\bar n-1} \bar \mu_j^\top (\nabla F(v^{\bar t,\bar a}))'(\bar t_j) \tau_j^2
		+
		\sum_{j,k = 1}^{\bar n-1} \bar \mu_j^\top \nabla^2 F(v^{\bar t,\bar a})(\bar t_j,\bar t_k)\bar \mu_k \tau_j \tau_k&\geq 0
		.
	\end{align}
\end{subequations}
Additionally,
if
\begin{subequations}
	\label{eq:sufficient}
	\begin{align}
		\label{eq:sufficient_1}
		&\mathrlap{\forall j = 1,\ldots, \bar n-1:}
		&
		\bar \mu_j^\top\nabla F(v^{\bar t,\bar a})(\bar t_j)& = 0
		,
		\\
		\label{eq:sufficient_2}
		&\forall \tau \in \TT_{\bar n,\bar t}\setminus\set{0}:
		&
		\sum_{j = 1}^{\bar n-1} \bar \mu_j^\top (\nabla F(v^{\bar t,\bar a}))'(\bar t_j) \tau_j^2
		+
		\sum_{j,k = 1}^{\bar n-1} \bar \mu_j^\top \nabla^2 F(v^{\bar t,\bar a})(\bar t_j, \bar t_k)\bar \mu_k \tau_j \tau_k&> 0,
	\end{align}
\end{subequations}
then $\bar t$ is a local minimizer of \STbar\ and a quadratic growth condition is satisfied.

Now, let $u\in \Uad$ and denote by $(\hat n,\hat a,\hat t)$ the full representation of $u$. Using \cref{thm:equivalence}, we can see that the conditions \eqref{eq:necessary} with $\hat n$, $\hat a$, $\hat t$ are necessary optimality conditions for local optimality w.r.t. \eqref{eq:prob} and in the case of $p\in(1,\infty)$ the conditions \eqref{eq:sufficient} are sufficient for local optimality and the validity of a local quadratic growth condition w.r.t. \eqref{eq:prob}.

In order to derive conditions involving the minimal representation, we introduce the matrices $\F\in \R^{(n-1) \times (n-1)}$ and $\hat \F\in \R^{(\hat n-1) \times (\hat n-1)}$ defined by 
\begin{align*}
	\tau^\top \F \tau
	&:=
	\sum_{j = 1}^{n-1} \mu_j^\top (\nabla F(u))'(t_j) \tau_j^2
	+
	\sum_{j,k = 1}^{n-1} \tau_j\mu_j^\top \nabla^2 F(u)(t_j, t_k) \mu_k \tau_k
	\qquad\forall \tau \in \R^{n-1},
	\\
	\hat\tau^\top \hat\F \hat\tau
	&:=
	\sum_{i = 1}^{\hat n-1} \hat\mu_i^\top (\nabla F(u))'(\hat t_i) \hat\tau_i^2
	+
	\sum_{i,l = 1}^{\hat n-1} \hat\tau_i\hat\mu_i^\top \nabla^2 F(u)(\hat t_i, \hat t_l) \hat\mu_l \hat\tau_l
	\qquad\forall \hat\tau \in \R^{\hat n-1},
\end{align*}
where $\hat\mu_i = \hat a_i - \hat a_{i+1}$, $i\in[\hat n-1]$. Also, set $I_j := \set{ i \in [\hat n-1] \given \hat t_i = t_j}$ therefore $\sum_{i \in I_j} \hat\mu_i = \mu_j$. By definition of the full representation, all $\hat a_i$ for $i\in I_j$ are convex combinations of $a_j$ and $a_{j+1}$ for all $j\in[n-1]$. Thus, there are $\lambda_i\in[0,1]$, $i\in I_j$ such that $\hat \mu_i = \lambda_i\mu_j$ and $\sum_{i\in I_j} \lambda_i = 1$ for all $j\in[n-1]$. Now, we can mimic the arguments of \cite[Lemma 3.9]{MarkoWachsmuth2022:1} by defining $\tau_j =
\sum_{i \in I_j} \lambda_i \hat\tau_i$, which implies that $\sum_{i \in I_j} \hat\mu_i \hat\tau_i = \mu_j\tau_j$ and therefore
\begin{align*}
	\hat\tau^\top \hat \F \hat\tau
	&=
	\sum_{j = 1}^{n-1} \parens[\Bigg]{ \sum_{i \in I_j} \hat\mu_i \hat\tau_i^2 }^\top(\nabla F(u))'(t_j)
	+
	\sum_{j,k = 1}^{n-1}\parens[\Bigg]{ \sum_{i \in I_j} \hat\mu_i \hat\tau_i }^\top \nabla^2 F(u)(t_j, t_k)
	\parens[\Bigg]{ \sum_{l \in I_k} \hat\mu_l \hat\tau_l }
	\\&
	=
	\sum_{j = 1}^{n-1} 
	\parens[\Bigg]{ \sum_{i \in I_j} \hat\mu_i \hat\tau_i^2 - \mu_j \tau_j^2}^\top
	(\nabla F(u))'(t_j)
	+
	\tau^\top \F \tau \\
	&= \sum_{j = 1}^{n-1} 
	\parens[\Bigg]{ \sum_{i \in I_j} \lambda_i \hat\tau_i^2 - \tau_j^2}\mu_j^\top
	(\nabla F(u))'(t_j)
	+
	\tau^\top \F \tau.
\end{align*}
Using the Jensen inequality, we can see that $\sum_{i \in I_j} \lambda_i \hat\tau_i^2  \geq \tau_j^2$ and the arguments in the proof of \cite[Lemma 3.9]{MarkoWachsmuth2022:1} lead to the equivalences
\begin{align*}
	\hat\tau^\top \hat\F \hat\tau \ge 0
	\quad\forall \hat\tau \in \hat\TT_{\hat n,\hat t}
	\qquad\Leftrightarrow\qquad
	\F \succeq 0
	&\;\land\;
	\mu_j^\top(\nabla F(u))'(t_j) \ge 0 \quad\forall j \in J
\end{align*}
and
\begin{align*}
	\hat\tau^\top \hat\F \hat\tau > 0
	\quad\forall \hat\tau \in \hat\TT_{\hat n,\hat t}\setminus\set{0}
	\qquad\Leftrightarrow\qquad
	\F \succ 0
	&\;\land\;
	\mu_j^\top(\nabla F(u))'(t_j) > 0 \quad\forall j \in J
	.
\end{align*}
Finally, combining these results with \eqref{eq:necessary} and \eqref{eq:sufficient} gives us \eqref{eq:SONC} and \eqref{eq:SOEC}.
\end{proof}
\begin{remark}
	The conditions \eqref{eq:SONC} and \eqref{eq:SOEC} are a generalization of (3.10), (3.11) from \cite{MarkoWachsmuth2022:1}, since $J = J^+\cup J^-$ and $j\in J^\pm$ implies $\sgn(\mu_j) = \mp1$. To avoid confusion, note that there is an error in the sign of $\mu_j$ in \cite[Theorem 3.6]{MarkoWachsmuth2022:1} and consequently, in all succeeding results of Section 3.2. Indeed, the first equation in the proof of Theorem 3.6 is only correct with $\mu_j:=a_j-a_{j+1}$.
	
	More generally, it is enough to assume that $\nabla F$ is continuosly differentiable and that $\nabla^2 F$ is continuos only in neighborhoods of the switching points $t_i$, $i\in[n-1]$, as these conditions are required solely for the Taylor expansion.
\end{remark}
	\section{Trust-region Method and Efficient Solution of Subproblems}
	\label{sec:method}
	We adapt the trust-region algorithm proposed in \cite[Chapter 3.1]{LeyfferManns2021}, where the 
	objective is partially linearized around a given feasible point. When employing such an algorithm, one has to solve subproblems of the form
	\begin{equation}
		\label{eq:TR_subprob}
		\tag{TR($v,g,\Delta^k$)}
		\begin{aligned}
			\text{Minimize} \quad & (g,u - v)_{L^2(0,T)^M} + \beta \TVp(u) - \beta\TVp(v) \\
			\text{such that} \quad & \norm{u-v}_{L^1}\leq \Delta^k,\quad u \in  \Uad
		\end{aligned}
	\end{equation}
	with a given function $v\in\Uad$ and $g=\nabla F(v)\in L^2(0,T)^M$. 
	
	In the case of $M=1$, it was shown in \cite[Proposition 3.2]{LeyfferManns2021} that \eqref{eq:TR_subprob} admits a minimizer. It is not hard to see that this result also applies for $M>1$.
	Now, \cite[Algorithm~1]{LeyfferManns2021}
	can be used verbatim and in Step~6 of this algorithm, the above subproblem has to be solved.
	However, the analysis of the algorithm requires more attention, as the results of \cite[Sections 4.3, 4.4]{LeyfferManns2021} cannot be applied without further comments.

	\subsection{Convergence Analysis}
	\label{sec:convergence}
	In this section, we want to verify that the results from \cite[Theorem 4.23]{LeyfferManns2021} still hold in our setting. When checking the proofs leading up to the theorem, it can be verified that it is essentially only necessary to adapt Lemma 4.8 to the situation that \eqref{eq:SONC_1} is violated and to notice that if $\TVp(w_1) - \TVp(w_2)\neq 0$ for $w_1,w_2\in\Uad$, we have $\abs*{\TVp(w_1) - \TVp(w_2)}\geq 1$, as there is no path with a length smaller than 1 between two different points in the grid $\Z^M$ w.r.t. any $p$-norm.
	
	We assume that $F$ is twice Fréchet-differentiable, $\nabla F(v)\in C^1(0,T)^M$ for all $v\in \Uad$ and that $F''(\xi)$ is uniformly bounded on $\left(L^1(0,T)^M\right)^2$ for all $\xi\in L^2(0,T)^M$, i.e. there exists $C\geq 0$ such that
	\[\abs*{F''(\xi)(u,w)}\leq C\norm{u}_{L^1}\norm{v}_{L^1}\quad\forall u,v\in L^1(0,T)^M.\]
	
	\begin{lemma}
		\label{lem:decrease_non_stationary}
		Let $u=v^{t,a}\in\Uad$ be given, where $v^{t,a}$ is the minimal representation of $u$. Assume that $u$ does not satisfy \eqref{eq:SONC_1}, i.e. there exists an $i\in[n-1]$ with $\mu_i^\top\nabla F(u)(t_i)\neq 0$. Then, there exist $\varepsilon>0$ and $h_0>0$ such that for all $h\leq h_0$, there is $d^h\in L^1(0,T)^M$ with $\norm{d^h}_{L^1} =h$, $u+d^h\in\Uad$ and $\innerprod{\nabla F(v)}{d^h}_{L^2(0,T)^M}\leq -\frac{\varepsilon}{2}h$.
	\end{lemma}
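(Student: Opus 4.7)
The plan is to perturb the single switching time $t_i$ at which stationarity fails; a one-sided shift of $u$ then yields a first-order decrease of $F$ at a positive rate, with the rate controlled by the quantity $\mu_i^\top\nabla F(u)(t_i)$.

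More concretely, I would set $g := \mu_i^\top \nabla F(u)(t_i)$, which is nonzero by hypothesis. Without loss of generality $g > 0$ (otherwise shift $t_i$ to the right rather than to the left; the roles of $a_i$ and $a_{i+1}$ swap, but the argument is identical). For $\delta > 0$, the natural candidate perturbation is
\[
d^\delta := -\mu_i \, \chi_{(t_i - \delta,\, t_i)},
\]
which replaces the value $a_i$ of $u$ on $(t_i - \delta, t_i)$ by $a_i - \mu_i = a_{i+1} \in \VV$, so that $u + d^\delta \in \Uad$ as long as $\delta < t_i - t_{i-1}$. Since $\|d^\delta\|_{L^1} = \delta\,\|\mu_i\|_1$, I would set $\delta := h/\|\mu_i\|_1$ to obtain $\|d^h\|_{L^1} = h$; requiring
\[
h_0 < \|\mu_i\|_1 \, \min\set{t_i - t_{i-1},\, t_{i+1} - t_i}
\]
guarantees feasibility for both signs of $g$.

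For the descent estimate, the assumption $\nabla F(u) \in C^1([0,T])^M$ makes the scalar map $s \mapsto \mu_i^\top \nabla F(u)(s)$ continuous on $[0,T]$ with value $g$ at $s = t_i$. Shrinking $h_0$ once more so that this map stays above $g/2$ on the interval $[t_i - h_0/\|\mu_i\|_1,\, t_i]$, I obtain
\[
\innerprod{\nabla F(u)}{d^h}_{L^2(0,T)^M} = -\int_{t_i - \delta}^{t_i} \mu_i^\top \nabla F(u)(s) \, \d s \le -\tfrac{g}{2}\,\delta = -\tfrac{g}{2\|\mu_i\|_1}\,h,
\]
so the conclusion holds with $\varepsilon := g/\|\mu_i\|_1 > 0$.

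There is no deep obstacle here: the construction is local to a single switching point, admissibility of $u + d^h$ is an immediate consequence of $a_i - \mu_i = a_{i+1} \in \VV$ (a built-in property of the minimal representation), and the only bookkeeping is the choice of sign for the shift together with the continuity step that converts the pointwise value $g$ into a uniform lower bound $g/2$ over the perturbation interval. Note also that the statement implicitly encodes a descent lemma that is subsequently exploited to rule out non-stationary trust-region limit points.
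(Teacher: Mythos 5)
Your proof is correct and takes essentially the same approach as the paper's: in both arguments one shifts the single switching time $t_i$ one-sidedly (direction chosen according to the sign of $\mu_i^\top\nabla F(u)(t_i)$), takes $d^h=\mp\mu_i$ times the characteristic function of an interval of length $h/\norm{\mu_i}_1$ adjacent to $t_i$ so that $\norm{d^h}_{L^1}=h$ and $u+d^h$ remains admissible, and uses continuity of $s\mapsto\mu_i^\top\nabla F(u)(s)$ to bound the resulting integral by $-\tfrac{\varepsilon}{2}h$. Your explicit rescaling $\varepsilon:=\abs{\mu_i^\top\nabla F(u)(t_i)}/\norm{\mu_i}_1$ is a slightly more careful bookkeeping of the constant, but the argument is the same.
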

	\begin{proof}
		First consider the case that $\mu_i^\top\nabla F(u)(t_i)> \varepsilon$ for some $\varepsilon>0$. Then, there is an $h_0>0$ such that for all $h\leq h_0$, we have
		\[\int_{t_i-h}^{t_i} \mu_i^\top\nabla F(u)(s)\,\d s>\frac{\varepsilon}{2}h.\]
		For $\tilde h:= h/\norm{\mu_i}_1$, choosing $d^h:= -\chi_{[t_{i}-\tilde h,t_i)}\mu_i$ gives us the statement.
		
		Likewise, if $\mu_i^\top\nabla F(u)(t_i)<- \varepsilon$ for some $\varepsilon>0$, there is an $h_0>0$ such that for all $h\leq h_0$
		\[\int_{t_i}^{t_i+h} \mu_i^\top\nabla F(u)(s)\,\d s<-\frac{\varepsilon}{2}h.\]
		For $\tilde h:= h/\norm{\mu_i}_1$, we can similarly choose $d^h:= \chi_{[t_{i},t_i+\tilde h)}\mu_i$.
	\end{proof}
	
	\begin{theorem}
		 \label{thm:convergence_TR}
		Let the iterates $(u^n)_{n\in\N}$ be produced by the trust region algorithm from \cite[Algorithm 1]{LeyfferManns2021}, where $\TV$ is replaced by $\TVp$. Then, $u^n\in \Uad$ for all $n\in\N$ and the sequence of objective values $(F(u_n) + \beta\TVp(u_n))_n$ is monotonously decreasing. Furthermore, exactly one of the following statements is true:
		\begin{enumerate}
			\item The sequence $(u^n)_n$ is finite and the final element $u^N$ solves \hyperref[eq:TR_subprob]{\textup{(TR($u^N,\nabla F(u^N), \Delta$))}} for some $\Delta>0$ and satisfies \eqref{eq:SONC_1}.
			\item The sequence $(u^n)_n$ is finite and the inner loop does not terminate for the final element $u^N$, which satisfies  \eqref{eq:SONC_1}.
			\item The sequence $(u^n)_n$ has a weak-$\star$ accumulation point in $\BV(0,T)$. Every weak-$\star$ accumulation point $v$ of $(u^n)_n$ is feasible, satisfies \eqref{eq:SONC_1} and for every subsequence $(u^{n_l})_{l\in \N}$ with $u^{n_l}\weaklystar v$ in $\BV(0,T)$ we have $u^{n_l}\to v$ in $L^1(0,T)^M$ and $\TVp(u^{n_l})\to \TVp(v)$ for all $p\in[1,\infty]$. Furthermore, if the trust region radii are bounded away from zero for a subsequence $(u^{n_k})_{k\in\N}$ and $\bar v$ is a weak-$\star$ accumulation point of $(u^{n_k})_k$, then $\bar v$ solves \hyperref[eq:TR_subprob]{\textup{(TR($\bar v, \nabla F(\bar v), \Delta$))}} for some $\Delta>0$.
		\end{enumerate}
	\end{theorem}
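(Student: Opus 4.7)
The plan is to transfer the proof of \cite[Theorem 4.23]{LeyfferManns2021} to our vector-valued setting, using the two observations highlighted just before \cref{lem:decrease_non_stationary}: \cref{lem:decrease_non_stationary} replaces the one-dimensional descent lemma used there, and any two distinct values of $\TVp$ on $\Uad$ differ by at least $1$, since along an admissible control $\TVp$ is a sum of $p$-norm distances between distinct lattice points in $\Z^M$. Feasibility of every iterate $u^n \in \Uad$ is built into \eqref{eq:TR_subprob}, and monotonicity of $(F(u^n) + \beta\TVp(u^n))_n$ is immediate from the trust-region step-acceptance rule.

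For the finite cases (i) and (ii), the reasoning of \cite{LeyfferManns2021} carries over once \cite[Lemma 4.8]{LeyfferManns2021} is replaced by \cref{lem:decrease_non_stationary}. If $u^N$ terminates the algorithm as a solution of \eqref{eq:TR_subprob} for some $\Delta > 0$, or if the inner loop fails to terminate (so that $\Delta^k \to 0$), then a violation of \eqref{eq:SONC_1} at $u^N$ would, via \cref{lem:decrease_non_stationary} combined with the uniform bound on $F''$, produce a linearly descending feasible step whose actual reduction dominates the second-order remainder inside any sufficiently small trust region. This contradicts either the subproblem-optimality of $u^N$ or the shrinking of $\Delta^k$, so \eqref{eq:SONC_1} must hold at $u^N$.

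For the infinite case, the lower bound on $F$ and the monotonicity yield a uniform bound on $\TVp(u^n)$, and together with the $L^\infty$-bound from $\Uad$ this gives boundedness of $(u^n)_n$ in $\BV(0,T)^M$. By \itemref{thm:props:4}, a weak-$\star$ accumulation point $v$ exists; for any subsequence $u^{n_l} \weaklystar v$, assertions \itemref{thm:props:1} and \itemref{thm:props:3} give $u^{n_l} \to v$ in $L^1(0,T)^M$, and $v \in \Uad$ follows by extracting a further a.e.\ pointwise convergent subsequence as in the existence argument after \cref{thm:props}. The $\TVp$-convergence then uses the quantization observation directly: $\TVp(u^{n_l})$ is a monotonically non-increasing sequence taking values in a set whose elements are separated by gaps of size at least $1$, so it is eventually constant, and lower semicontinuity (\itemref{thm:props:5}) identifies its limit as $\TVp(v)$.

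The main obstacle is the stationarity assertion together with subproblem-optimality of accumulation points when the trust-region radii stay bounded away from zero, and I would argue both by contradiction. If some accumulation point $v$ violated \eqref{eq:SONC_1}, \cref{lem:decrease_non_stationary} furnishes a direction of linear first-order descent at $v$; using continuity of $\nabla F$ near $v$ along $u^{n_l} \to v$ in $L^1$ together with the uniform quadratic bound on $F''$, this direction can be lifted to a feasible step at $u^{n_l}$ for all sufficiently large $l$, producing an actual reduction bounded below by a positive constant and contradicting convergence of the objective values. The analogous lifting argument transfers any strictly better competitor of \eqref{eq:TR_subprob} at $\bar v$ into a strictly better competitor at $u^{n_k}$ whenever $\Delta^{n_k} \ge \Delta > 0$, once more invoking the integer separation of $\TVp$ to ensure the predicted reduction survives the transfer to the neighbouring iterate. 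The careful bookkeeping in these transfers---controlling the trust-region constraint, the $\TVp$ part of the predicted reduction, and the quadratic remainder uniformly along the convergent subsequence---is the technically delicate core of the argument.
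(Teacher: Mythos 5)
Your proposal follows the same route as the paper: the published proof consists exactly of rerunning \cite[Lemma 4.11, Lemma 4.19, Theorem 4.23]{LeyfferManns2021} with $\Uad$ in place of the scalar feasible set, with \cref{lem:decrease_non_stationary} substituted for their Lemma 4.8 wherever L-stationarity is invoked, and with two minor adjustments (a Cauchy--Schwarz-type bound on $\abs{\innerprod{\nabla F(u^{n_l})}{u^{n_l}-\tilde v}_{L^2(0,T)^M}}$ and the constants in their Proposition A.2). Your outline of the finite cases, of the compactness and feasibility argument via \cref{thm:props}, and of the contradiction arguments for stationarity and subproblem optimality of accumulation points is consistent with this.

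One step in your sketch is genuinely flawed, namely your argument for $\TVp(u^{n_l})\to\TVp(v)$. First, $\TVp(u^n)$ is not a monotone sequence: only the merit function $F(u^n)+\beta\TVp(u^n)$ is non-increasing, and $\TVp$ may increase along accepted steps when $F$ decreases sufficiently. Second, lower semicontinuity (\itemref{thm:props:5}) only yields $\TVp(v)\le\liminf_{l\to\infty}\TVp(u^{n_l})$; it cannot ``identify the limit'', since $L^1$-convergence does not exclude a strict drop of the variation in the limit. The repair, which is what the adapted \cite[Lemma~4.19]{LeyfferManns2021} provides, runs as follows: continuity of $F$ on $L^1(0,T)^M$ along $u^{n_l}\to v$ together with convergence of the monotone merit values shows that $\TVp(u^{n_l})$ converges, hence by the gap property it is eventually equal to a constant $c\ge\TVp(v)$; the case $c\geq \TVp(v)+1$ is then ruled out by inserting $v$ as a feasible competitor into the trust-region subproblem at $u^{n_l}$, whose predicted reduction would otherwise be bounded away from zero --- this is precisely where the modified estimate replacing the inequality below (4.5) of \cite{LeyfferManns2021}, mentioned in the paper's proof, enters --- contradicting the convergence of the objective values. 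With this step supplied, your argument coincides with the paper's.
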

	\begin{proof}
		We follow the proofs of \cite[Lemma 4.11, Lemma 4.19, Theorem 4.23]{LeyfferManns2021} with the notation $v^n\gets u^n$, $\alpha\gets \beta$, $\mathcal{F}_{(P)} \gets\Uad$. Also, whenever L-stationarity is mentioned, we will consider \eqref{eq:SONC_1} and use \cref{lem:decrease_non_stationary} instead of \cite[Lemma 4.8]{LeyfferManns2021}. 
		
		Note that we will get
		\[\abs{\innerprod{\nabla F(u^{n_l})}{u^{n_l}-\tilde v}_{L^2(0,T)^M}}\leq \sqrt{T\max_{i,j}\norm{\nu_i-\nu_j}_2}\sqrt{\Delta^{n_{l+1},k}}\norm{\nabla F(v^{n_l})}_{L^2}\]
		instead of the inequality below \cite[(4.5)]{LeyfferManns2021}, which however does not alter the argument. Finally, we mention that \cite[Proposition A.2]{LeyfferManns2021} holds with $M_1 = \sqrt{\max_i \norm{\nu_i}_1}$ and $M_2 = \sqrt{\max_{i,j}\norm{\nu_i-\nu_j}_1}$.
	\end{proof}
	We cannot expect to get any information regarding the validity of \eqref{eq:SONC_2},  \eqref{eq:SONC_3}, as the trust region method only uses first order information while generating a solution of \eqref{eq:prob}.
	
	\subsection{Employing Bellman's Optimality Principle for \eqref{eq:TR_subprob}}
	\label{sec:bellman}
	We will discretize the problem \eqref{eq:TR_subprob} and solve the discretization numerically in a very efficient way using Bellman's optimality principle.
	First, we rephrase the problem by omitting terms in the objective independent of $u$, obtaining the formulation
	\begin{equation}
		\label{eq:TR_subprob2}
		\tag{TR2}
		\begin{aligned}
			\text{Minimize} \quad & (g,u)_{L^2(0,T)^M} + \beta \TVp(u)\\
			\text{such that} \quad & \norm{u-v}_{L^1(0,T)}\leq \Delta^k,\quad u \in  \Uad.
		\end{aligned}
	\end{equation}
	
	Next, we introduce a grid $0=t_0<\dots<t_n = T$ on $[0,T]$ for some $n\in\N$ such that $t_i-t_{i-1} = \frac Tn =: \Delta t$ for all $i\in[n]$. We interpret $u$, $v$ and $g$ as piecewise constant functions, i.e.\ $u=\sum_{i=1}^n u^i\chi_{[t_{i-1},t_i)}$ with $u^i\in \VV$, the representations of $g$ and $v$ are analogous with $v^i$ and $g^i$ being the (componentwise) mean values of $v$ and $g$ over $[t_{i-1},t_i]$. We can represent $u^i$ by $\nu_{l^i}$ for some $l^i\in[d]$. Thus, the discrete solution can be determined by finding the best decisions $l^i$ in every timestep.
	
	Similarly to \cite{MarkoWachsmuth2022:1}, we introduce the 
	so-called budget $B:=\lfloor \Delta^k/\Delta t \rfloor\in\N$, which marks an upper bound for $\sum_{j=1}^{n}\norm{u^j-v^j}_{1}$. Now, the discretized formulation of \eqref{eq:TR_subprob2} reads 
	\begin{equation}
		\label{eq:TR_discr}
		\tag{TR3}
		\begin{aligned}
			\text{Minimize} \quad & \Delta t\sum_{j=1}^{n} {g^j}^\top \nu_{l^j}  + \beta \sum_{j=1}^{n-1}\norm{\nu_{l^{j+1}}-\nu_{l^j}}_p\\
			\text{such that} \quad & \sum_{j=1}^{n} \norm{\nu_{l^j}-v^j}_1\leq B,\quad l^j\in [d]\;\forall j\in[n].
		\end{aligned}
	\end{equation}
	We define the value function 
	\begin{equation*} 
		\label{eq:TR_value_func1}
		\Phi_{l, i, b}
		:=
		\min
		\set*{
			\Delta t\sum_{j=i}^{n} {g^j}^\top \nu_{l^j}  + \beta \sum_{j=i}^{n-1}\norm{\nu_{l^{j+1}}-\nu_{l^j}}_p
			\given
			\begin{aligned}
				& l^i = l,\\
				&l^{i+1},\dots,l^{n}\in [d],\\
				& \sum_{j=i}^{n}\norm{\nu_{l^j}-v^j}_1 = b
			\end{aligned}
		}
	\end{equation*}
	for $b\in[B]_0$, $l\in [d]$ and $i\in[n]$ with the convention $\min\emptyset :=\infty$. The sums in $\Phi$ can be split, which allows us to rewrite the function as
	\begin{equation*} 
		\Phi_{l, i, b}
		=
		\min
		\set*{
			\Delta t {g^i}^\top \nu_l  + \beta \norm{\nu_{l^{i+1}}-\nu_l}_p + \Phi_{l^{i+1},i+1,b-\tilde b}
			\given
			\begin{aligned}
				& l^{i+1} \in [d],\\
				& \norm{\nu_l-v^i}_1 = \tilde b\leq b
			\end{aligned}
		}
	\end{equation*}
	for $i\in[n-1]$. Now, the solution of \eqref{eq:TR_discr} can be gained by calculating $\Phi_{l,i,b}$ for $i=n-1,\dots,1$ and all values of $l\in [d]$, $b\in[B]_0$, starting with
	\[\Phi_{l,n,b} := \begin{cases}
		\Delta t g^n\nu_l\,&\text{if } b = \norm{\nu_l-v^n}_1,\\
		+\infty\,&\text{else}.
	\end{cases}\]
	The corresponding minimizer in the evaluation of $\Phi_{l,i,b}$ is saved in an array $U\in\R^{d\times (n-1)\times B}$, i.e.,
	\begin{equation*} 
		U_{l, i, b}
		:=
		\argmin
		\set*{
			\Delta t {g^i}^\top \nu_l  + \beta \norm{\nu_{l^{i+1}}-\nu_l}_p + \Phi_{l^{i+1},i+1,b-\tilde b}
			\given
			\begin{aligned}
				& l^{i+1} \in [d],\\
				& \norm{\nu_l-v^i}_1 = \tilde b\leq b
			\end{aligned}
		},
	\end{equation*}
	with the convention $\argmin\emptyset := 0$.
	This array $U$ can be used to reconstruct the solution after $\Phi_{l,1,b}$ is calculated: Starting with 
	\begin{equation}
		\label{eq:argmin_phi}
		(\hat l^1,b_1) = \argmin_{l\in [d],b\in[B]_0}\Phi_{l,1,b},
	\end{equation}
	we set 
	\begin{equation}
	\label{eq:U_recursive}
	\hat l^{i+1} = U_{\hat l^i,i,b_i},\quad b_{i+1} = b_i - \norm{\nu_{\hat l^i}-v^i}_1	
	\end{equation}
	for all $i\in[n]$.
	The solution of \eqref{eq:TR_discr} is now given by $(\hat l^1,\dots,\hat l^n)$.
	\begin{remark}
		\label{rem:trust-subproblem}
		The above approach is a generalization of the subproblem solver from \cite[Chapter 5]{MarkoWachsmuth2022:1}. In the same way, the procedure has a runtime of $\OO(d^2n^2)$.
		Furthermore, only a $d\times 2\times B$ array $\Phi$ is needed when the above calculations are carried out and the generalization to non-equidistant meshes is only possible when the occurring interval lengths $t_j - t_{j-1}$
		are integer multiples of a minimal length. 
	\end{remark}
	The procedure may be generalized for problems with a constraint bounding the number of switches. Indeed, if the number of switches is bounded by $\sigma_{\text{max}}>0$, we need to calculate $\Phi_{l, i, b, \sigma}$ for every $b\in[B]_0$, $l\in [d]$, $i\in[n]$ and $\sigma\in[\sigma_{\text{max}}]$, where $\sigma$ is the number of switches up until $i$ (i.e., the number all $j\in \{i+1,\dots,n-1\}$ with $l^j\neq l^{j+1}$). Thus, $U$ and $\Phi$ will gain an additional dimension and the runtime will increase to $\OO(d^2n^2\sigma_{\text{max}})$. This works similarly for componentwise switching bounds.
	\section{Numerical Examples}
	\label{sec:examples}
		We will investigate the performance of the trust-region algorithm exemplary on the Lotka Volterra multimode fishing problem governed by an ODE and on a heat-distribution problem governed by a PDE. The performance of the trust region algorithm was tested with 500 samples for each problem and different grid sizes $n$ using the parameters $\Delta^0 = 2$, $\sigma = \frac12$ and randomly generated initial guesses constructed such that there are no more than $n/10$ switches across all component functions. In the following, we will present the distribution of objective values and calculation times. Furthermore, for the output $u$ of the trust region algorithm, we will verify that $\norm{(\mu_i^\top\nabla F(u)(t_i))_{i\in[n_t-1]}}_2$ is indeed close to zero (as suggested by \cref{thm:convergence_TR}), where $(n_t,a,t)$ is the minimal representation of $u$.
		
		The algorithm is written in Julia Version 1.10.0 and all results are calculated with an Intel(R) Core(TM) i9-10900 CPU @ 2.80GHz on a Linux OS. In the first example, all occurring ODEs where solved using an explicit Euler scheme. In the second example, the problem was discretized by employing a finite element approach with Lagrange elements of degree 2 and 545 degrees of freedom. The corresponding time-dependent ODEs where solved with an implicit Euler method.
		
	\subsection{Lotka Volterra Multimode Fishing Problem}
	For parameter vectors $v_1 = (0.2,0.4,0.01)^\top$, $v_2= (0.1,0.2,0.1)^\top$ and $\beta = 10^{-3}$, $T=12$, we consider
	\begin{equation}
		\label{eq:LVM}
		\tag{LVM}
		\begin{aligned}
			\text{Minimize} \quad & \frac 12\int_0^T (y_1(t)-1)^2 + (y_2(t)-1)^2 + \beta\TV_\infty(u)\\
			\text{such that} \quad & y_1'(t) =y_1(t) - y_1(t)y_2(t) -y_1(t)\cdot v_1^\top u(t),\\
			&y_2'(t) = -y_2(t) + y_1(t)y_2(t) - y_2(t)\cdot v_2^\top u(t)\\
			&y(0) = (0.5,0.7)^\top,\\
			&\sum_{i=1}^3 u_i(t) = 1,\\
			&u(t)\in\set{0,1}^3\,\text{ a.e. on } (0,T).
		\end{aligned}
	\end{equation}
	The different controls model different fishing equipment, where catching from the predator or the prey population can be prioritized. This problem is used for numerical testing in \cite{BestehornHansknechtKirchesManns2020}, \cite{ZeileWeberSager2022} and is the vector-valued variant of (LV) from \cite[Section 6.1]{MarkoWachsmuth2022:1}. Similarly to \cite{MarkoWachsmuth2022:1}, we argue that \eqref{eq:LVM} satisfies our regularity assumptions. Note that it is no generalization due to the constraint $\sum_{i = 1}^3 u_i(t) = 1$, which means that it is not admissible not to fish. Without total variation regularization, the problem exhibits a chattering behaviour, meaning the optimal solution only exists in the limit of infinite switching.
	
	The numeric results can be observed in \cref{tab:LVM} and the violin plots showcasing the distributions of objective values, calculation times and the values of $\norm{(\mu_i^\top\nabla F(u)(t_i))_i}_2$ are displayed in \cref{fig:Violin_LVM}.
	
\begin{table}[htp]
	\centering
	\begin{tabular}{r|c|c|c|c}
		$n$  & range of objectives & range of times [$s$] &  average objective & average time [$s$] \\ \hline
		512 & $[0.9504,1.05]$     & $[0.1208,1.6795]$    & $0.9683$  & $0.5923$ \\
		1024 & $[0.9352,0.9927]$     & $[0.6912,4.3112]$ & $0.9412$ & $1.8987$\\
		2048  & $[0.9279,0.9746]$     & $[2.1305,11.881]$ & $0.931$ & $5.3815$ \\
		4096  & $[0.9243,0.9369]$     & $[6.8965,50.1027]$ & $0.9263$ & $16.643$ 
	\end{tabular}
	\caption{Results of applying the trust region method $500$ times to \eqref{eq:LVM} with random starting point $u_0$ for 
		different grid sizes $n$}
	\label{tab:LVM}
\end{table}

\begin{figure}[h]
	\includegraphics[width = .485\textwidth]{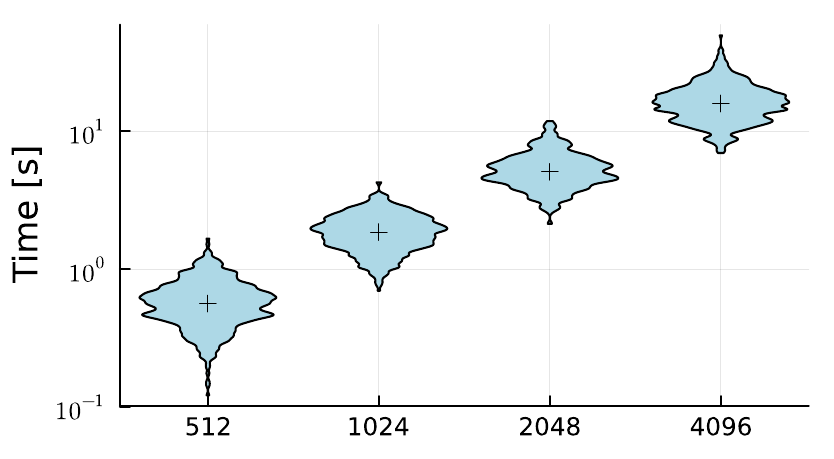}
	\includegraphics[width = .485\textwidth]{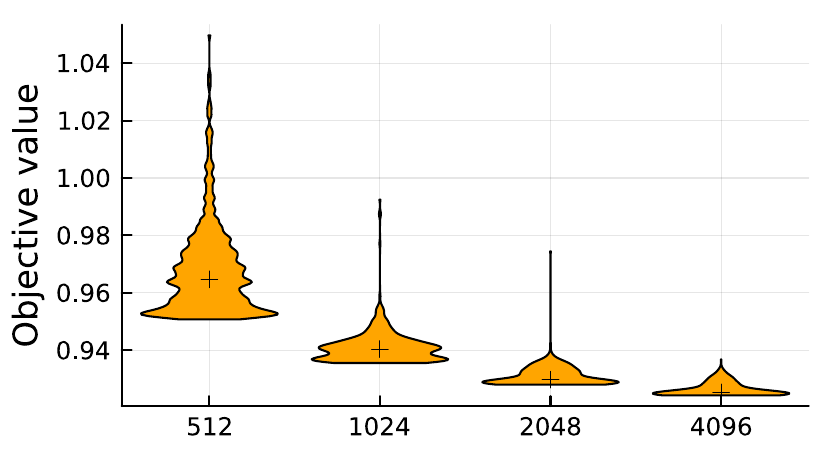}
	\vspace{.1cm}
	\begin{center}
		\begin{tikzpicture}
			\rotatebox{90}{
				\node at (0,0) {\footnotesize$\norm{(\mu_i^\top\nabla F(u)(t_i))_{i\in[n_t-1]}}_2$};}
			\node at (3.75,0) {\includegraphics[width = .485\textwidth]{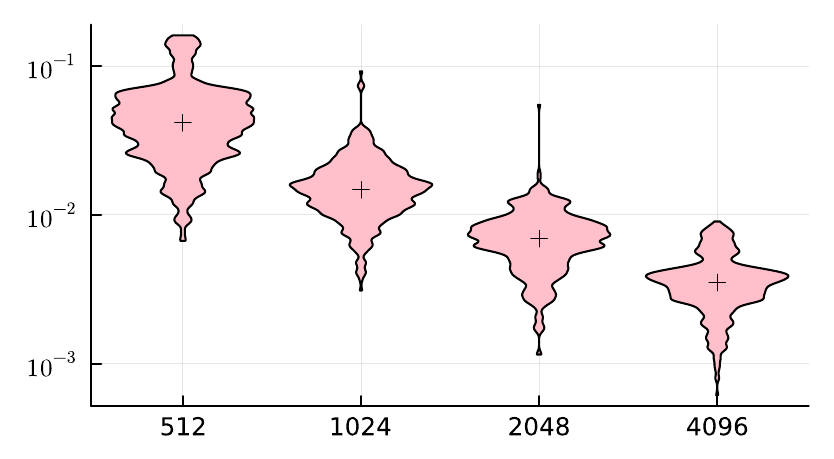}};
		\end{tikzpicture}
	\end{center}
	
	\caption{Distributions of the computation times, objective values and $\norm{(\mu_i^\top\nabla F(u)(t_i))_{i\in[n_t-1]}}_2$ for the trust region algorithm applied to \eqref{eq:LVM} for different grid sizes $2^k$, $k\in\{9,\dots,12\}$ and 500 samples. Cross represents the median.}
	\label{fig:Violin_LVM}
\end{figure}

We can see that for finer grids, the objective value will generally be smaller, and \eqref{eq:SONC_1} is more likely to be satisfied, although at the cost of a higher computation time. Indeed, by observing $\nabla F$ at the switches of $u$ in \cref{fig:sol_LVM}, we can visually verify \eqref{eq:SONC_1}. 

The computation time consists mainly in solving \eqref{eq:TR_subprob2} and in calculating the objective and derivative values. In every iteration of the outer loop, the trust region subproblem needs to be solved only once, as halving the trust region radius $\Delta^k$ (in the case of $\pred> 0$, $\ared < \sigma \pred$) leads to a halved budget. Consequently, the solution can be found using \eqref{eq:argmin_phi} and \eqref{eq:U_recursive} by replacing $B$ with $B/2$.

The derivative at the current iterate also is computed only once every outer iteration. However, the objective value needs to be evaluated in every inner iteration to calculate the actual reduction.
In the case of the above tested samples, the proportion of objective and derivative calculation time is showcased in \cref{tab:times_LVM}. One can see that about 93\% of the computing time is spent on solving the trust region subproblem \eqref{eq:TR_subprob2}.
	
\begin{table}[htp]
	\centering
	\begin{tabular}{r|c|c|c|c}
		$n$  & 512 & 1024 &  2048 & 4096 \\ \hline
		range [\%] & $[3.8,57.1]$     & $[4.7,34.5]$    & $[5.3,20.9]$  & $[5.5,14.4]$ \\
		average [\%] & $7.5$     & $7.7$ & $7.6$ & $6.8$
	\end{tabular}
	\caption{Percentage of computation time for the objective and derivative w.r.t. the total computation time for solving \eqref{eq:LVM} - range and average.}
	\label{tab:times_LVM}
\end{table}

We mention that the optimal objective value of the relaxed problem (with $u(t)\in[0,1]^3$) without the term $\beta\TVp(u)$ is roughly 0.91437 (cf. \href{https://mintoc.de/index.php/Lotka_Volterra_Multimode_fishing_problem}{mintoc.de}, \cite{Sager2011}). 

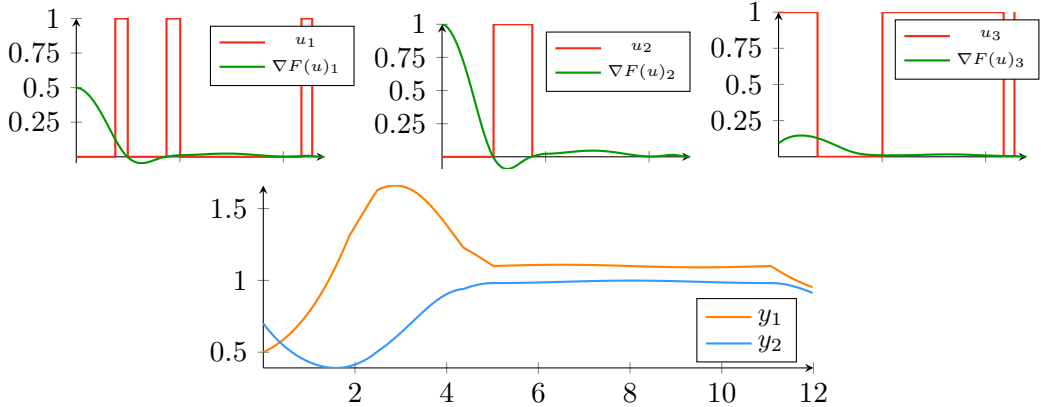
\begin{figure}[h]
	\centering
	\begin{tikzpicture}
		\begin{axis}[
			xmin=0, xmax=12,
			height = 3.5cm,
			width = .33\textwidth,
			axis lines=middle,
			ytick = {-.25,0,.25,.5,.75,1},
			xticklabel = \empty,
			legend style={at={(1.12,.95)}}
			]
			\addplot+[const plot, mark=none, BTUred,thick] file[skip first] {data_files/u_1__LVM_switch.dat};
			\addplot+[BTUgreen, mark = none,thick] file[skip first] {data_files/nabla_f_1__LVM_new.dat};
			\legend{\tiny $u_1$,\tiny $\nabla F(u)_1$}
		\end{axis}
	\end{tikzpicture}
	\begin{tikzpicture}
		\begin{axis}[
			xmin=0, xmax=12,
			width = .33\textwidth,
			height = 3.5cm,
			axis lines=middle,
			ytick = {-.25,0,.25,.5,.75,1},
			xticklabel = \empty,
			legend style={at={(1.,.95)}}
			]
			\addplot+[const plot, mark=none, BTUred,thick] file[skip first] {data_files/u_2__LVM_switch.dat};
			\addplot+[BTUgreen, mark = none,thick] file[skip first] {data_files/nabla_f_2__LVM_new.dat};
			\legend{\tiny $u_2$,\tiny $\nabla F(u)_2$}
		\end{axis}
	\end{tikzpicture}
	\begin{tikzpicture}
		\begin{axis}[
			xmin=0, xmax=12,
			width = .33\textwidth,
			height = 3.5cm,
			axis lines=middle,
			ytick = {-.25,0,.25,.5,.75,1},
			xticklabel = \empty,
			legend style={at={(1.05,.95)}}
			]
			\addplot+[const plot, mark=none, BTUred,thick] file[skip first] {data_files/u_3__LVM_switch.dat};
			\addplot+[BTUgreen, mark = none,thick] file[skip first] {data_files/nabla_f_3__LVM_new.dat};
			\legend{\tiny $u_3$,\tiny $\nabla F(u)_3$}
		\end{axis}
	\end{tikzpicture}
	\begin{tikzpicture}[>=latex]
		\begin{axis}[
			xmin=0, xmax=12,axis lines=middle,
			height = 4cm,
			width = .6\textwidth,
			legend pos=south east,
			]
			\addplot+[BTUorange,mark=none,thick] file[skip first] {data_files/y_1__LVM_new.dat};
			\addplot+[BTUlightblue, mark=none,thick] file[skip first] {data_files/y_2__LVM_new.dat};
			\legend{\small $y_1$, \small $y_2$}
		\end{axis}
	\end{tikzpicture}
	\caption{Solution $u$ of \eqref{eq:LVM} with scaled derivatives such that $\norm{\nabla F(u)}_{L^\infty} = 1$ and corresponding states $y_1$, $y_2$. Objective value: 0.9243. }
		\label{fig:sol_LVM}
\end{figure}

\subsection{Heat Distribution Problem}

We investigate a problem governed by the heat equation over the spatial domain $\Omega = (-1,1)^2$
and terminal time $T=10$. 
\begin{equation}
	\label{eq:Heat}
	\tag{HEAT}
	\begin{aligned}
		\text{Minimize} \quad & \int_0^T\norm{y(t,\cdot)-y_d(t,\cdot)}_{L^2(\Omega)}^2 + \gamma\sum_{i=1}^2 u_i(t)\,\d t + \beta\TV_2(u)\\
		\text{such that} \quad & \frac{\partial}{\partial t}y(t,x) - \alpha\Delta y(t,x) = \sum_{i = 1}^2 f_i(x)u_i(t)\quad \text{for a.a. }(x,t)\in (0,T)\times\Omega,\\
		&\nabla_x y \cdot \mathrm n = \kappa(y_{\mathrm{out}}-y) \quad\text{a.e. on }(0,T)\times \Gamma,\\
		&y(0, \cdot) = y_0 \quad\text{ a.e. on }\Omega,\\
		&u(t)\in\set{0,1,2,3,4,5}^2\,\text{ for a.a. } t \in (0,T).
	\end{aligned}
\end{equation}
Here, $y_d\in L^2((0,T)\times\Omega)$, $y_0\in L^2(\Gamma)$ and $y_{\mathrm{out}}\in L^2((0,T)\times\Gamma)$ denote the target, initial and environment temperature distributions. We set $\alpha = 0.5$, $\beta = 10^{-1}$, $\gamma= 10$, $\kappa = 0.12$, $y_d \equiv 20$, $y_0\equiv 10$ and $y_{\mathrm{out}} \equiv 0$. A Robin boundary condition on $\Gamma = \partial\Omega$ models the diffusion to the outside with insulation properties represented by $\kappa$. The functions $f_1, f_2\in L^2(\Omega)$ given by
\[f_i(x) = a_i e^{-b_i\norm{x-x_i}_2^2},\quad i=1,2,\quad a_i,b_i>0\]
with $a_1=a_2 = 20$, $b_1 = b_2 = 10$ are chosen to simulate heaters at $x_1 = (-1,0)$ and $x_2 = (1,0)$ controlled by the heating levels $u_1,u_2\in\{0,\dots,5\}$. Finally, the objective requires heating the domain from $y_0$ to the target temperature distribution $y_d$ while expending as little energy through the heaters as possible.

The validity of the regularity assumptions for problems of the form \eqref{eq:prob} is shown in \cref{sec:Appendix2}. Numeric results are displayed as in the previous section in \cref{tab:HEAT} and \cref{fig:Violin_HEAT}.

\begin{table}[htp]
	\centering
	\begin{tabular}{r|c|c|c|c}
		$n$  & range of objectives & range of times [$s$] &  average objective & average time [$s$] \\ \hline
		128 & $[807.2,816.4]$     & $[1.341,4.581]$    & $807.6$  & $3.738$ \\
		256 & $[808.0,814.1]$     & $[4.919,16.569]$    & $807.2$ & $12.409$\\
		512  & $[808.2,811.9]$     & $[14.485,76.184]$ & $806.9$ & $49.252$ \\
		1024  & $[808.5,810.0]$     & $[57.375,149.629]$ & $808.9$ & $93.428$ 
	\end{tabular}
	\caption{Results of applying the trust region method $500$ times to \eqref{eq:Heat} with random starting point $u_0$ for 
		different temporal grid sizes $n$}
	\label{tab:HEAT}
\end{table}

\begin{figure}[htp]
	\includegraphics[width = .485\textwidth]{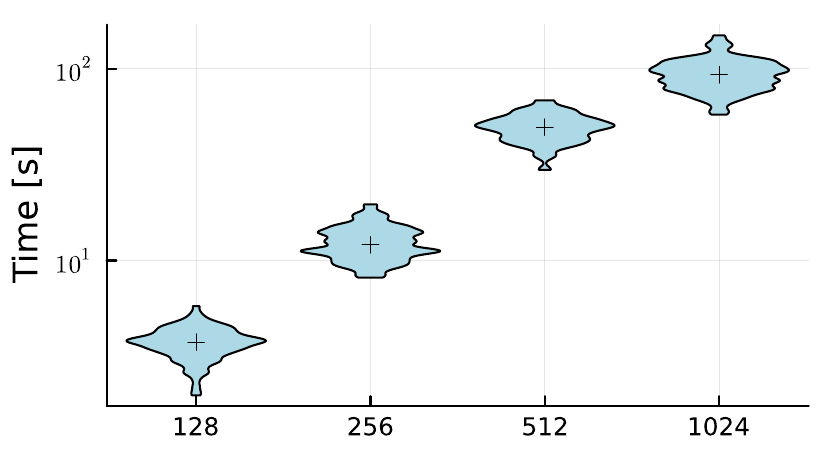}
	\includegraphics[width = .485\textwidth]{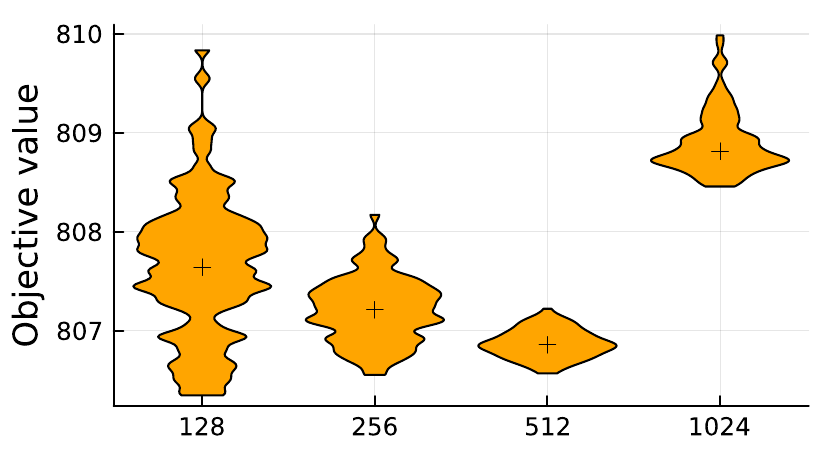}
	\vspace{.1cm}
	\begin{center}
		\begin{tikzpicture}
			\rotatebox{90}{
			\node at (0,0) {\footnotesize$\norm{(\mu_i^\top\nabla F(u)(t_i))_{i\in[n_t-1]}}_2$};}
			\node at (3.75,0) {\includegraphics[width = .485\textwidth]{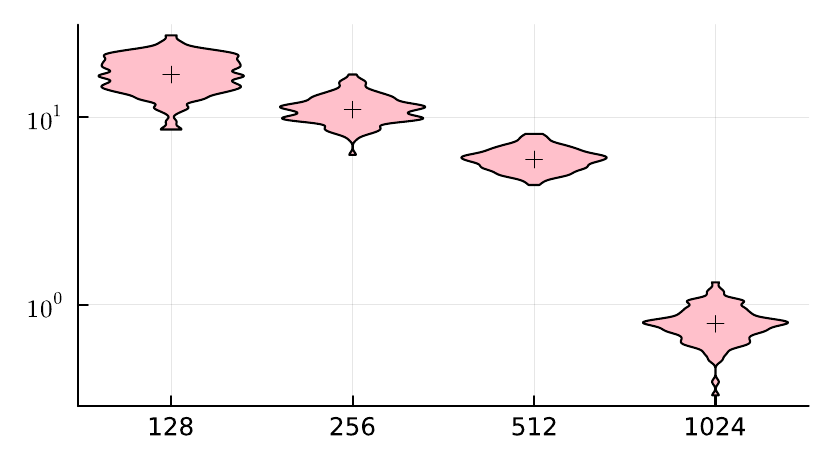}};
		\end{tikzpicture}
	\end{center}
	
	\caption{Distributions of the computation times, objective values and $\norm{(\mu_i^\top\nabla F(u)(t_i))_{i\in[n_t-1]}}_2$ for the trust region algorithm applied to \eqref{eq:Heat} for different grid sizes $2^k$, $k\in\{7,\dots,10\}$ and 500 samples. Cross represents the median.}
	\label{fig:Violin_HEAT}
\end{figure}

The results show similar patterns as in the previous example, with the notable exception that objective values for $n=1024$ are generally higher than for smaller values of $n$. However, this is due to discretization errors being more significant in the computation of the objective for rough grids. Note that the comparatively high computation time stems largely from the calculation of objective and derivative values, cf. \cref{tab:times_Heat}.

\begin{table}[htp]
	\centering
	\begin{tabular}{r|c|c|c|c}
		$n$  & 128 & 256 &  512 & 1024 \\ \hline
		range [\%] & $[49.3,59.8]$     & $[53.1,62.2]$    & $[55.2,62.9]$  & $[37.3,46.7]$ \\
		average [\%] & $53.2$     & $56.4$ & $58.1$ & $40.9$
	\end{tabular}
	\caption{Percentage of computation time for the objective and derivative w.r.t. the total computation time for solving \eqref{eq:Heat} - range and average.}
	\label{tab:times_Heat}
\end{table}

In \cref{fig:sol_HEAT}, we can see that the controls cycle between the values 2 and 3 to maintain the present temperature distribution, while at the end of the time horizon, the term $\sum u_i(t)$ in the objective leads to the control approaching zero, as the reduction of $\norm{y-y_d}_{L^2(\Omega)}^2$ is becoming less beneficial.

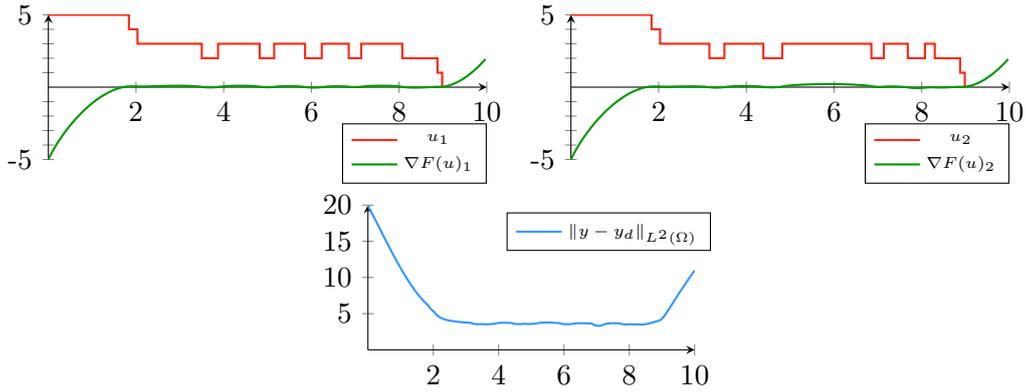
\begin{figure}[tp]
\centering
\begin{tikzpicture}
	\begin{axis}[
		xmin=0, xmax=10,
		width = .5\textwidth,
		height = 3.5cm,
		axis lines=middle,
		ytick={-5, -4, -3, -2, -1, 0, 1, 2, 3, 4, 5},
		yticklabels={-5, , , , , 0, , , , , 5},
		legend style={at={(1.,.25)}}
		]
		\addplot+[const plot, mark=none, BTUred,thick] file[skip first] {data_files/u_1__heat_switch.dat};
		\addplot+[BTUgreen, mark = none,thick] file[skip first] {data_files/nabla_f_1__heat_new2.dat};
		\legend{\tiny $u_1$,\tiny $\nabla F(u)_1$}
	\end{axis}
\end{tikzpicture}
\begin{tikzpicture}
	\begin{axis}[
		xmin=0, xmax=10,
		width = .5\textwidth,
		height = 3.5cm,
		axis lines=middle,
		ytick={-5, -4, -3, -2, -1, 0, 1, 2, 3, 4, 5},
		yticklabels={-5, , , , , 0, , , , , 5},
		legend style={at={(1.,.25)}}
		]
		\addplot+[const plot, mark=none, BTUred,thick] file[skip first] {data_files/u_2__heat_switch.dat};
		\addplot+[BTUgreen, mark = none,thick] file[skip first] {data_files/nabla_f_2__heat_new2.dat};
		\legend{\tiny $u_2$,\tiny $\nabla F(u)_2$}
	\end{axis}
\end{tikzpicture}
\begin{tikzpicture}
	\begin{axis}[
		xmin=0, xmax=10,
		ymin = 0,
		width = .4\textwidth,
		height = 3.5cm,
		axis lines=middle,
		legend style={at={(1.05,.95)}}
		]
		\addplot+[BTUlightblue, mark=none,thick] file[skip first] {data_files/y_diff.dat};
		\legend{\tiny $\norm{y-y_d}_{L^2(\Omega)}$}
	\end{axis}
\end{tikzpicture}

\caption{Solution $u$ of \eqref{eq:Heat} with scaled derivatives such that $\norm{\nabla F(u)}_{L^\infty} = 5$ and $\norm{y(t,\cdot)-y_d(t,\cdot)}_{L^2(\Omega)}$ for the corresponding state $y$ with $\beta = 10^{-1}$. Objective value $808.5$.}
\label{fig:sol_HEAT}
\end{figure}

To observe the influence of $p$ on the solution, consider \cref{fig:different_p_values_Heat}, where solutions generated by the trust region algorithm for $p\in\{1,2,4,\infty\}$ with $n=1024$ and the same initial function $u_0$ are displayed.
For $p=1$, the switching behavior does not exhibit any particular structural pattern and switches occur independently across the different components of the solution. Indeed, as $p$ increases from $p=1$ to $p\in\{2,4,\infty\}$, multiple components of the solution tend to switch simultaneously. 

\begin{figure}[htp]
	\centering
	\begin{minipage}{0.48\textwidth}
		\centering
		\begin{tikzpicture}
			\begin{axis}[
				xmin=0, xmax=10,
				width = .9\textwidth,
				height = 3.2cm,
				axis lines=middle,
				ytick = {0,1,2,3,4,5},
				xticklabel = \empty,
				]
				\addplot+[const plot, mark=none, BTUlightblue,thick] file[skip first] {data_files/u_1__p1_switch_heat.dat};
				\addplot+[const plot,BTUorange, mark = none,thick, dashed] file[skip first] {data_files/u_2__p1_switch_heat.dat};
			\end{axis}
		\end{tikzpicture}
		\caption*{$p=1$}
	\end{minipage}
	\begin{minipage}{0.48\textwidth}
		\centering
		\begin{tikzpicture}
			\begin{axis}[
				xmin=0, xmax=10,
				width = .9\textwidth,
				height = 3.2cm,
				axis lines=middle,
				ytick = {0,1,2,3,4,5},
				xticklabel = \empty,
				]
				\addplot+[const plot, mark=none, BTUlightblue,thick] file[skip first] {data_files/u_1__p2_switch_heat.dat};
				\addplot+[const plot,BTUorange, mark = none,thick, dashed] file[skip first] {data_files/u_2__p2_switch_heat.dat};
			\end{axis}
		\end{tikzpicture}
		\caption*{$p=2$}
	\end{minipage}
	
	\vspace{-0.1em} 
	
	\begin{minipage}{0.48\textwidth}
		\centering
		\begin{tikzpicture}
			\begin{axis}[
				xmin=0, xmax=10,
				width = .9\textwidth,
				height = 3.2cm,
				axis lines=middle,
				ytick = {0,1,2,3,4,5},
				xticklabel = \empty,
				]
				\addplot+[const plot, mark=none, BTUlightblue,thick] file[skip first] {data_files/u_1__p4_switch_heat.dat};
				\addplot+[const plot,BTUorange, mark = none,thick, dashed] file[skip first] {data_files/u_2__p4_switch_heat.dat};
			\end{axis}
		\end{tikzpicture}
		\caption*{$p=4$}
	\end{minipage}
	\begin{minipage}{0.48\textwidth}
		\centering
		\begin{tikzpicture}
			\begin{axis}[
				xmin=0, xmax=10,
				width = .9\textwidth,
				height = 3.2cm,
				axis lines=middle,
				ytick = {0,1,2,3,4,5},
				xticklabel = \empty,
				]
				\addplot+[const plot, mark=none, BTUlightblue,thick] file[skip first] {data_files/u_1__pInf_switch_heat.dat};
				\addplot+[const plot,BTUorange, mark = none,thick, dashed] file[skip first] {data_files/u_2__pInf_switch_heat.dat};
			\end{axis}
		\end{tikzpicture}
		\caption*{$p=\infty$}
	\end{minipage}
	\vspace{-.1em}
	\begin{minipage}{0.5\textwidth}
		\centering
		\begin{tikzpicture}
			\draw[BTUlightblue, thick] (0,0.2) -- (0.5,0.2);
			\node at (0.8,0.2) {$u_1$};
			\draw[BTUorange, dashed, thick] (1.3,0.2) -- (1.8,0.2);
			\node at (2.1,0.2) {$u_2$};
		\end{tikzpicture}
	\end{minipage}
	
	\caption{Solutions $u$ for different values of $p$ in $\TVp$ with $n=1024$, $\beta = 10^{-2}$.}
	\label{fig:different_p_values_Heat}
\end{figure}

	\section{Conclusion and Outlook}
	\label{sec:conclusion}
	We investigated vector-valued integer control optimization problems with total variation regularization. Our research led to the introduction of a novel total variation functional, incorporating the $p$-vector norm to exert more influence on the control structure. We established first and second-order optimality conditions using a switching point reformulation by generalizing existing results of \cite{MarkoWachsmuth2022:1}.
	
	We mention that replacing $\TVp(u)$ by $\sum_{t\in S}\norm{u(t+)-u(t-)}_p+\varepsilon$ for some small $\varepsilon>0$, where $S$ is the set of all discontinuities of (some suitable representant of) $u$, would have sufficed in order to get \cref{thm:equivalence} even for $p\in\{1,\infty\}$. However, such an approach leads to unwieldy notation and we would not have been able to use existing results of the total variation functional.
	
	Using Bellman's principle akin to \cite{MarkoWachsmuth2022:1}, solutions of \eqref{eq:prob} can again be efficiently generated. We verified the ability of the trust region algorithm to yield solutions conforming to our first-order optimality conditions, or at least closely approximating them. Through numerical experiments, we demonstrated the algorithm's capability to produce satisfactory results within reasonable computation times.
	
	In the broader landscape of applications, IOCPs with state constraints are of interest. We tested the trust-region method in combination with a penalty approach. However, the results were unsatisfactory. Thus, it remains a challenge to incorporate state constraints in IOCPs with total variation regularization. Additionally, we will be looking at mixed-integer optimal control problems in future studies.
		
	\appendix

	\section{Regularity Assumptions for \texorpdfstring{\eqref{eq:Heat}}{the Heat Distribution Problem}}
	\label{sec:Appendix2}
	We consider the state equation from the control problem \eqref{eq:Heat}.
	Since we have to prove Gâteaux differentiability w.r.t.\ controls from $u \in L^1(0,T)^2$,
	we show that the solution mapping $u \mapsto y$
	of the heat equation
	is continuous from
	$L^1(0,T)^2$
	to
	$L^2( (0,T) \times \Omega)$.
	For this,
	we employ a very weak formulation.
	For a given control $u \in L^1(0,T)^2$,
	we define the function $v \in L^1(0,T; L^2(\Omega))$ via
	\begin{equation*}
		v(t,x) := \sum_{i = 1}^2 f_i(x) u_i(t)
		.
	\end{equation*}
	We test the heat equation in \eqref{eq:Heat}
	with a suitable test function $z$
	and integrate by parts.
	This leads to
	\begin{align*}
		&
		-\int_0^T \int_\Omega y \parens{ \partial_t z + \alpha \Delta z }\,\d x \d t
		=
		\int_\Omega y_0 z(0, \cdot)\, \d x
		+
		\kappa \alpha \int_0^T \int_\Gamma y_{\mathrm{out}} z\, \d x \d t
		+
		\int_0^T \int_\Omega v z \,\d x \d t
		\\
		&
		\quad
		\forall z \in Z\colon\,
		z(T,\cdot) = 0 \text{ in } \Omega,
		\;
		\nabla_x z \cdot \mathrm{n} + \kappa z = 0 \text{ on } (0,T) \times \Gamma
	\end{align*}
	which is the very weak formulation of the heat equation.
	Here,
	$Z := H^1(0,T; L^2(\Omega)) \cap L^2(0,T; H^2(\Omega))$
	and
	$\partial_t$ denotes the weak derivative w.r.t.\ the time variable.
	Note that the integral involving $v z$ is well defined,
	since
	$H^1(0,T; L^2(\Omega))$
	embeds into
	$C([0,T]; L^2(\Omega))$.

	In what follows, we argue that
	there exists a unique very weak solution
	and that the mapping
	$L^1(0,T)^2 \ni u \mapsto y \in L^2((0,T) \times \Omega )$
	is continuous.
	Then, it is straightforward to check that the reduced formulation of \eqref{eq:Heat}
	is of form
	\eqref{eq:prob}
	and
	satisfies the regularity assumptions.

	To this end,
	we consider the adjoint equation
	\begin{align*}
		-\partial_t z - \alpha \Delta z &= g \quad\text{in } (0,T) \times \Omega \\
		\nabla_x z \cdot n + \kappa z &= 0 \quad\text{on } (0,T) \times \Gamma \\
		z(T, \cdot) &= 0 \quad\text{in } \Omega
	\end{align*}
	with $g \in L^2( (0,T) \times \Omega )$.
	Note that this is a heat equation running backward in time.
	One can check that this equation satisfies maximum parabolic regularity,
	see
	\cite[Theorem~5.16]{Haller-DintelmannRehberg2009}
	in combination with standard elliptic regularity results
	or
	\cite[Theorem~5.3]{ChillMeinlschmidtRehberg2020}
	in combination with \cite[Theorem~4.1]{Dore1993}.
	Consequently,
	the solution operator $S\colon g \mapsto z$ of the adjoint equation
	is continuous from
	$L^2( (0,T) \times \Omega)$
	to
	$Z$.

	We start by checking uniqueness.
	Given two very weak solutions $y_1, y_2 \in L^2( (0,T) \times \Omega )$
	we use $z := S(y_1 - y_2) \in Z$
	as a test function.
	By subtracting the two very weak formulations,
	we immediately get
	$\norm{y_1 - y_2}_{L^2( (0,T) \times \Omega )}^2 = 0$
	and this shows uniqueness.

	For the existence of a solution,
	we employ an adjoint approach.
	Let us denote by
	$R\colon L^2( (0,T) \times \Omega ) \to C([0,T];L^2(\Omega)) \times L^2( (0,T) \times \Gamma) \times L^2(\Omega)$
	the mapping
	\begin{equation*}
		g \mapsto
		R g
		:=
		(
			S g,
			(S g)|_{(0,T) \times \Gamma},
			(S g)(0, \cdot)
		)
		.
	\end{equation*}
	Consequently,
	the adjoint maps from
	$M([0,T];L^2(\Omega)) \times L^2( (0,T) \times \Gamma) \times L^2(\Omega)$
	to
	$L^2( (0,T) \times \Omega )$.
	For given $u$, we define $v$ as above and set
	$y := R\adjoint ( v, y_{\mathrm{out}}, y_0 )$.
	In order to check that $y$ satisfies the very weak solution,
let $z \in Z$ with
	$z(T,\cdot) = 0$ in $\Omega$
	and
	$\nabla_x z \cdot \mathrm{n} + \kappa z = 0$ on $(0,T) \times \Gamma$
	be given.
	Note that $z = S g$
	with $g := -\partial_t z - \alpha \Delta z$.
	By definition of the adjoint operator,
	we obtain
	\begin{align*}
		-\int_0^T \int_\Omega y \parens{ \partial_t z + \alpha \Delta z } \, \d x \d t
		&=
		\dual{y}{ -\partial_t z - \alpha \Delta z }
		=
		\dual{R\adjoint ( v, y_{\mathrm{out}}, y_0 )}{ g }
		\\&
		=
		\dual{( v, y_{\mathrm{out}}, y_0 )}{R g}
		=
		\dual{( v, y_{\mathrm{out}}, y_0 )}{
		(
			z,
			z |_{(0,T) \times \Gamma},
			z(0, \cdot)
		)
		}
		\\&
		=
		\int_\Omega y_0 z(0, \cdot) \, \d x
		+
		\kappa \alpha \int_0^T \int_\Gamma y_{\mathrm{out}} z \, \d x \d t
		+
		\int_0^T \int_\Omega v z \, \d x \d t
		.
	\end{align*}
	This shows that $y$ is indeed a very weak solution.
	Due to continuity of the adjoint $R\adjoint$,
	we also get that the mapping from
	$u \in L^1(0,T)^2$
	to
	$y \in L^2( (0,T) \times \Omega)$
	is continuous.

	\printbibliography
	
\end{document}